\newtheorem{defn}{Definition}[section]
\newtheorem{thm}{Theorem}[section]
\newtheorem{prop}{Proposition}[section]
\newtheorem{lem}{Lemma}[section]
\newtheorem{cor}{Corollary}[section]
\newtheorem{rem}{Remark}[section]
\DeclareMathOperator*{\argmin}{argmin}
\newcommand{\R}{\mathbb{R}}
\newcommand{\Rd}{{\mathbb{R}^{d}}}
\newcommand{\Rdd}{{\mathbb{R}^{2d}}}
\newcommand{\mpd}{{\mathscr{P}_{2}(\Rd)}}
\newcommand{\mpdd}{{\mathscr{P}_{2}(\Rdd)}}
\newcommand{\mpda}{{\mathscr{P}_{2}^a(\Rd)}}
\newcommand{\mP}{\mathscr{P}(\Rd)}
\newcommand{\mF}{{\mathcal{F}}}
\newcommand{\mE}{{\mathcal{E}}}
\newcommand{\ee}{\varepsilon}
\newcommand{\Rn}{\mathbb{R}^n}
\newcommand{\rhou}{\rho_{1}}
\newcommand{\rhod}{\rho_{2}}
\newcommand{\etau}{\eta_1}
\newcommand{\etad}{\eta_2}
\newcommand{\ddt}{\frac{d}{dt}}
\newcommand{\mmu}{\bm{\mu}}
\newcommand{\rrho}{\bm{\rho}}
\newcommand{\rrhot}{\bm{\rho_{\tau}}}
\newcommand{\rrhotn}{\bm{\rho}_{\tau}^{n}}
\newcommand{\rrhotnn}{\bm{\rho}_{\tau}^{n+1}}
\newcommand{\nnu}{\bm{\nu}}
\newcommand{\rhout}{\rho_{1,\tau}}
\newcommand{\rhodt}{\rho_{2,\tau}}
\newcommand{\rhoit}{\rho_{i,\tau}}
\newcommand{\rhoitn}{\rho_{i,\tau}^{n}}
\newcommand{\rhoitnn}{\rho_{i,\tau}^{n+1}}
\newcommand{\etaitnn}{\eta_{i,\tau}^{n+1}}
\newcommand{\rhojtn}{\rho_{j,\tau}^{n}}
\newcommand{\rhoutn}{\rho_{1,\tau}^{n}}
\newcommand{\rhoutnn}{\rho_{1,\tau}^{n+1}}
\newcommand{\etautnn}{\eta_{1,\tau}^{n+1}}
\newcommand{\rhodtn}{\rho_{2,\tau}^{n}}
\newcommand{\rhodtnn}{\rho_{2,\tau}^{n+1}}
\newcommand{\etadtnn}{\eta_{2,\tau}^{n+1}}
\newcommand{\mW}{\mathcal{W}}
\newcommand{\mK}{\mathcal{K}}
\newcommand{\su}{\bm{S}_{\mE_1}}
\newcommand{\sd}{\bm{S}_{\mE_2}}
\newcommand{\si}{\bm{S}_{\mE_i}}
\newcommand{\id}{\mathrm{id}}
\begin{document}
\author{M. Di Francesco \and A. Esposito \and S. Fagioli}
\address{M. Di Francesco, A. Esposito, Simone Fagioli - DISIM - Department of Information Engineering, Computer Science and Mathematics, University of L'Aquila, Via Vetoio 1 (Coppito)
67100 L'Aquila (AQ) - Italy}
\email{marco.difrancesco@univaq.it}
\email{antonio.esposito3@graduate.univaq.it}
\email{simone.fagioli@univaq.it}


\title[Nonlinear cross-diffusion-interaction systems]{Nonlinear degenerate cross-diffusion systems with nonlocal interaction}
\date{}

\begin{abstract}
\noindent
We investigate a class of systems of partial differential equations with nonlinear cross-diffusion and nonlocal interactions, which are of interest in several contexts in social sciences, finance, biology, and real world applications. Assuming a uniform "coerciveness" assumption on the diffusion part, which allows to consider a large class of systems with degenerate cross-diffusion (i.e. of porous medium type) and relaxes sets of assumptions previously considered in the literature, we prove global-in-time existence of weak solutions by means of a semi-implicit version of the Jordan-Kinderlehrer-Otto scheme. Our approach allows to consider nonlocal interaction terms not necessarily yielding a formal gradient flow structure.
\end{abstract}

\maketitle
\section{Introduction}
This paper is devoted to the mathematical theory of the system of PDEs with nonlinear cross-diffusion and nonlocal interactions
\begin{equation}\label{sist}
\begin{cases}
	                 \partial_{t}\rhou=\mbox{div}\left(\rhou\nabla A_{\rhou}(\rhou,\rhod)+\rhou\nabla H_{1}\ast\rhou+\rhou\nabla K_{1}\ast\rhod\right) \\
		        \partial_{t}\rhod=\mbox{div}\left(\rhod\nabla A_{\rhod}(\rhou,\rhod)+\rhod\nabla H_{2}\ast\rhod+\rhod\nabla K_{2}\ast\rhou\right).
\end{cases}
\end{equation}
Here, $\rhou$ and $\rhod$ are two nonnegative functions defined on $\Rd\times [0,+\infty)$ modelling the densities of \emph{two population species}. $A=A(\rhou,\rhod)$ is a nonlinear function depending on both species. The functions $H_{1}, H_{2}$ are called \textit{self-interaction} potentials, in that they model interactions between individuals of the same species, whereas $K_{1}, K_{2}$ are called \textit{cross-interaction} potentials since they account for interactions between individuals of opposite species. 

Models of the form \eqref{sist} arise in many contexts in biology, socio-economical sciences, and real world applications. The structure in \eqref{sist} is a natural generalization of the one-species gradient flow
\[\partial_t \rho = \mathrm{div}(\rho\nabla (f(\rho) +W\ast \rho)),\]
which arises in chemotaxis of cells (see e.g. \cite{keller_segel,jager,blanchet}), in the description of animal swarming phenomena (\cite{mogilner,topaz,okubo,capasso}), in the physics of granular media \cite{toscani_granular}, in the modelling of pedestrian movements \cite{helbing}, in geology \cite{sedimentation}, in the modelling of opinion formation \cite{sznajd}, and in the modelling of dislocations for crystal defects \cite{peletier}. 

The many-species version \eqref{sist} is motivated in several contexts. It was derived in the study of bacterial chemotaxis models in which two species compete for one single nutrient, see \cite{conca,espejo}, see also \cite{preziosi} for applications of such approach to the study of tumor growth. In the modelling of pedestrian movements, the use of a two species model was adopted in \cite{degond} to study multi-lane pattern formations. Other nonlocal transport models for pedestrian movements with many species have been studied in \cite{colombo,crippa}. A very interesting framework in which the many species modelling approach \eqref{sist} is gaining increasing attention in the applied mathematical community is that of opinion formation. A first model considering opinion leaders and followers was introduced in \cite{during}. Later on, improved models were considered in \cite{escudero,assertiveness}.
In population biology, the model derived by Bruna and Chapman in \cite{bruna_chapman} can be considered as a paradigm for other existing models in this context. We also mention here the application of this approach to the modelling of predator-prey dynamics, see e.g. \cite{kolokolnikov,fagioli}. In quantitative finance, a model of the form \eqref{sist} can be recovered in the study of socio-economic dynamics of speculative markets with two species of agents (fundamentalists and chartists), see e.g. the book \cite{pareschi}.

Let us now look at the mathematical structure of system \eqref{sist} a bit more closely. Even at a formal structure level, in the \emph{symmetric cross-interaction case} $K_1=K_2=K$ the system \eqref{sist} displays a \emph{gradient flow structure}. More precisely, let
\begin{align*}
&\mF(\rhou,\rhod)=\int_{\R^d}A(\rhou,\rhod) dx + \frac{1}{2}\int_{\R^d}\rhou H_1\ast \rhou dx\\
&\qquad + \frac{1}{2}\int_{\R^d}\rhod H_2\ast \rhod dx + \int_{\R^d}\rhou K \ast \rhod dx.
\end{align*}
Then \eqref{sist} can be formally seen as 
\begin{equation}\label{sistGF}
\begin{cases}
\displaystyle{\partial_{t}\rhou=\mbox{div}\left(\rhou\nabla\frac{\delta \mF}{\delta\rhou} \right)} \vspace{2mm}\\
		       \displaystyle{ \partial_{t}\rhod=\mbox{div}\left(\rhod\nabla \frac{\delta \mF}{\delta\rhod}\right),}
\end{cases}
\end{equation}
where the symbols $\frac{\delta \mF}{\delta\rhou}$ and $\frac{\delta \mF}{\delta\rhod}$ denote "functional" derivatives. Such a gradient flow structure is no longer featured in case $K_1\neq K_2$, since in this case the cross interaction terms in \eqref{sist} cannot be written with the functional derivative formalism used for the other terms (actually, if $K_1=\alpha K_2$ with $\alpha>0$ the gradient flow structure can still be achieved by slightly modifying the metric). In order to stick as much as possible to a gradient flow mechanism, one can use some sort of time-splitting argument in which the cross-interaction potentials $K_{1}\ast\rhod$ and $K_{2}\ast \rhou$ are taken explicit in time. More precisely, set
\begin{align}
&\mF((\rhou,\rhod)\,|\,(\bar\rhou,\bar\rhod))=\int_{\R^d}A(\rhou,\rhod) dx + \frac{1}{2}\int_{\R^d}\rhou H_1\ast \rhou dx\nonumber\\
&\qquad + \frac{1}{2}\int_{\R^d}\rhod H_2\ast \rhod dx + \int_{\R^d}\rhou K_1 \ast \bar\rhod dx+\int_{\R^d}\rhod K_2 \ast \bar\rhou dx.\label{functional_relative_energy}
\end{align}
The object defined in \eqref{functional_relative_energy} is a \emph{relative energy} functional, since it computes the energy at the state $(\rhou,\rhod)$ "with respect to" the state $(\bar\rhou,\bar\rhod)$, where the latter only affects the cross interaction terms. Then, for a fixed time step $\Delta t>0$, denoting $\rhou^n=\rhou(n\,\Delta t)$, $\rhod^n=\rhod(n\,\Delta t)$, the implicit-explicit time-discretized system
\begin{equation}\label{sistGF2}
\begin{cases}
	               \displaystyle{ \frac{\rhou^{n+1}-\rhou^n}{\Delta t}
                    =\mbox{div}\left(\rhou^{n+1}\nabla\frac{\delta \mF((\rhou^{n+1},\rhod^{n+1})\,|\,(\rhou^n,\rhod^n))}{\delta\rhou^{n+1}} \right)}\\
		       \displaystyle{\frac{\rhod^{n+1}-\rhod^n}{\Delta t}=\mbox{div}\left(\rhod^{n+1}\nabla\frac{\delta \mF((\rhou^{n+1},\rhod^{n+1})\,|\,(\rhou^n,\rhod^n))}{\delta\rhod^{n+1}} \right),}
\end{cases}
\end{equation}
is a potential candidate to approximate our system \eqref{sist} as $\Delta t\searrow 0$. Indeed, it is very easy to check that the functional derivatives with respect to $\rhou^{n+1}$ and $\rhod^{n+1}$ in \eqref{sistGF2} give rise to an approximated version of the right hand side of \eqref{sist}. We will come back to this aspect later on in this paper. 

Putting aside the gradient flow structure of \eqref{sist} for a moment, we observe that the system \eqref{sist} consists of second order terms, or diffusion terms, and of nonlocal interaction terms. The latter do not constitute a major problem for the existence theory in case the potentials $K_1, K_2, H_1, H_2$ involved are smooth enough, which in fact will be the case in our paper. The diffusion part is actually the most challenging part of \eqref{sist}, so let us single it out and try to understand its structure in detail. Expanding the terms involving $A(\rhou,\rhod)$ in \eqref{sist} (assume $A$ is smooth enough), we obtain
\begin{equation}\label{sist_expanded}
\begin{cases}
	                 \partial_{t}\rhou=\mbox{div}\left(\rhou A_{\rhou,\rhou}\nabla \rhou +\rhou A_{\rhou,\rhod}\nabla\rhod +\rhou\nabla H_{1}\ast\rhou+\rhou\nabla K_{1}\ast\rhod\right) \\
		        \partial_{t}\rhod=\mbox{div}\left(\rhod A_{\rhod,\rhod}\nabla \rhod +\rhod A_{\rhod,\rhou}\nabla\rhou+\rhod\nabla H_{2}\ast\rhod+\rhod\nabla K_{2}\ast\rhou\right).
\end{cases}
\end{equation}
Hence, system \eqref{sist_expanded} can be potentially seen as a nonlinear parabolic system with a nonlocal perturbation. In fact, \eqref{sist_expanded} features a \emph{diffusion matrix}
\[D(\rhou,\rhod)=
\left[
\begin{matrix}
\rhou A_{\rhou,\rhou}(\rhou,\rhod) & \rhou A_{\rhou,\rhod}(\rhou,\rhod) \\
\rhod A_{\rhod,\rhou}(\rhou,\rhod) & \rhod A_{\rhod,\rhod}(\rhou,\rhod)
\end{matrix}\right].
\]
A well understood, by now standard theory developed many decades ago in \cite{ladyzhenskaya} shows that a global well-posedness result for such a system could be proven if the diffusion matrix $D(\rhou,\rhod)$ would be symmetric and positive definite, where $D$ has to be replaced by its symmetric part $(D+D^T)/2$ in case $D$ is not symmetric. However, a simple calculation shows that 
\[\mathrm{det}(D+D^T)=4\rhou\rhod\mathrm{det} D^2 A-A_{\rhou\rhod}^2(\rhou-\rhod)^2,\]
which shows that the determinant of $(D+D^T)/2$ may be negative in case $A_{\rhou\rhod}\neq 0$ and $|\rhou-\rhod|\gg 1$, even in cases in which the Hessian of $A$ is positive definite. Roughly speaking, the condition $A_{\rhou\rhod}\neq 0$ means that \emph{cross-diffusion} may be present, in that each species has a flux component following the gradient of the other species. As it is well-known for instance in the theory of chemotaxis modelling, see e.g. \cite{blanchet}, and in other examples studied previously e.g. \cite{stara}, there are cases in which solutions may break down in a finite time if the strict, uniform parabolicity condition $D(\rhou,\rhod)\geq c\mathbb{I}$ ($c>0$) is not satisfied. The main issue with the lack of global existence is related with the need of a priori estimates on the solutions $(\rhou,\rhod)$ and its space derivatives. This problem was deeply investigate by Amann in the nineties, see e.g. \cite{amann}, and by other authors, see \cite{pierre,nguyen}. Alternative concepts of parabolicity have been formulated which still allow to prove a global existence result in significant cases. A relevant example is that of Petrowski's parabolicity, see e.g. \cite{kreiss}. With the notation of \eqref{sist_expanded}, such condition requires the existence of a symmetric, positive definite matrix $P$ such that $PD^T+DP$ is (symmetric) positive definite. In the simple example of quadratic diffusion $A(\rhou,\rhod)=\rhou^2+\rhod^2+\rhou\rhod$, the existence of such a positive definite symmetrizer would imply once again a contradiction if e.g. $\rho_1$ is close to vacuum and $\rho_2$ is not (we leave the details to the reader).

The above considerations show that classical parabolic theories fail as long as they do not take into account of the gradient flow structure we outlined above, see e.g. \eqref{sistGF2}. A general existence theory for \eqref{sist} with arbitrary cross-diffusion terms is unlikely to be carried out, except perhaps in a measure solution setting. Moreover, even in cases in which the diffusion term $A(\rhou,\rhod)$ has a nonnegative Hessian, a general existence theory is not provided by classical parabolic theories. It is, by now, well understood that a gradient flow structure whatsoever must be exploited in order to achieve a satisfactory theory. 

With the exception of a recent contribution to this line of research proposed by J\"ungel in \cite{juengel} (see also previous results in \cite{chen_juengel,burger_DF_piet_sch}), in which a formal gradient flow formulation provides the estimates needed to prove global existence, most of the current efforts to a global existence theory for cross-diffusion "gradient systems" are based on the `many species' version of the \emph{Wasserstein gradient flow} theory of \cite{JKO,otto,AGS,CDFFLS}, which essentially allows to make the time-discretization approach of \eqref{sistGF2} rigorously posed in a variational form using the relative energy functional $\mF$ in \eqref{functional_relative_energy}. Such an approach has been already successfully used in \cite{DFF} in the case $A\equiv 0$, i.e. for a system of nonlocal interaction equations with two species and non symmetric cross-interactions. The first result in which this technique was used with a cross-diffusion term is the one proven in \cite{LM} (no interaction terms). Other results
\cite{L,CL1,CL2} only apply to diagonal diffusion $A(\rhou,\rhod)=a_1(\rhou)+a_2(\rhod)$ and in some cases only on bounded domains. A deep result \cite{matthes_zinsl}, proven in one space dimension, introduces a notion of displacement convexity in the many species framework and proves an existence result under a uniform convexity assumption on $A$ and in one space dimension.

\medskip

In our paper, we focus in particular on the case of \emph{degenerate diffusion}, namely functions $A(\rho_1,\rho_2)$ that behave like power laws $\rho_i^{m_i}$ with $m_i>1$, a situation which potentially gives rise to possible loss of regularity near the vacuum state, see \cite{vazquez}. More in detail, we propose the following improvements to the theory:
\begin{enumerate}
\item We provide, for the first time, a theory which combines cross-diffusion effects with nonlocal interaction terms, and the latter need not feature symmetric cross-interactions (i.e. $K_1$ and $K_2$ are linearly independent).
\item In previous results (see \cite{matthes_zinsl}), the nonlinear diffusion function $A(\rho_1,\rho_2)$ is required to be uniformly convex, whereas our set of assumptions allows for some degeneracy in the convexity.
\item Our result holds in arbitrary dimension $d\geq 1$ and on the whole $\R^d$.
\item The growth conditions on the diffusion function $A$ allow for non-homogeneous dependencies with respect to each species.
\end{enumerate}

\noindent
We now state our assumptions on $A:\R^2\to\R$:
\begin{itemize}
\item[(D1)] There exist $m_1,m_2>1$ and a function $B\in C^2([0,+\infty)^2;\,[0,+\infty))$ such that $B(0,0)=0$, $\nabla B(0,0)=(0,0)$, and
\[A(\rho_1,\rho_2)=B\left(\rho_1^{\frac{m_1}{2}},\rho_2^{\frac{m_2}{2}}\right);\]
\item [(D2)] Given
\begin{equation}\label{eqMi}
M_i:=\frac{m_i\, (d+2)}{d}
\end{equation}
with $i=1,2$ and $m_1, m_2$ as in (D1), we assume there exist $\alpha_i\in [m_i,M_i)$, $i=1,2$, such that the functions
\[\eta_1^2 B_{\eta_1,\eta_1},\quad \eta_2^2 B_{\eta_2,\eta_2},\quad \eta_1 \eta_2 B_{\eta_1,\eta_2}\]
grow at most as 
\[\eta_1^a\eta_2^b,\qquad \hbox{with}\qquad \frac{a\, m_1}{2\alpha_1}+\frac{b\, m_2}{2\alpha_2} =1\]
for large (positive) $\eta_1$ and $\eta_2$, where the constants $a$ and $b$ can vary for each term;
\item [(D3)] The function $B$ in (D1) is uniformly convex, or equivalently 
\[<D^2A(\xi_1,\xi_2) V,V>\ \ge C_1(\xi_1^{m_1-2}||V_1||^2+\xi_2^{m_2-2}||V_2||^2)\] 
for all 
$(\xi_1,\xi_2)\in (0,+\infty)^2$ and $(V_1,V_2)\in\R^2$ and for some constant $C_1>0$. Again $m_1$ and $m_2$ are as in the  assumption (D1).
\end{itemize}
As a trivial consequence of (D1) and (D3), it is easy to check that there exists a constant $C>0$ such that
\begin{equation}\label{exD2}
A(\rho_1,\rho_2)\geq C (\rho_1^{m_1}+\rho_2^{m_2}).
\end{equation}
\begin{rem}
\emph{Before we discuss the assumptions on the nonlocal part, let us produce some significant examples of diffusion functions $A(\rho_1,\rho_2)$ satisfying out set of assumptions (D1)-(D3). 
\begin{itemize}
\item [(1)] Every positive definite quadratic form with respect to powers $\rho_1^{m_1/2}$ and $\rho_2^{m_2/2}$ for arbitrary $m_1,m_2>1$ is allowed in our set of assumptions. In particular, we can consider diffusion terms of the form
\[A(\rho_1,\rho_2)=a\rho_1^{m_1} + b\left(\rho_1^{\frac{m_1}{2}}+\rho_2^{\frac{m_2}{2}}\right)^2,\qquad a,b>0.\]
\item [(2)] Any diffusion function of the form
\[A(\rho_1,\rho_2)=a\rho_1^{m_1}+b\rho_2^{m_2}+p(\rho_1+\rho_2),\qquad a,b>0,\]
with $m_1,m_2>1$ and $p$ a nonnegative, (not necessarily uniformly) smooth, convex function satisfies our set of assumptions as long as $p$ grows slower than $\rho_1^{m_1\left(1+\frac{2}{d}\right)} + \rho_2^{m_2\left(1+\frac{2}{d}\right)}$ as $|(\rho_1,\rho_2)|\rightarrow +\infty$.
\item [(3)] An interesting diffusion function arising in population dynamics (see e.g. \cite{bertsch,sorting,zoology}) is $A=(\rho_1+\rho_2)^2$. Clearly (see the discussion at the end of this section), this function does not satisfy our set of assumptions. On the other hand, if we perturb it by a power of just one species with an exponent less than $2$, namely if we consider
\[A(\rho_1,\rho_2)=a\rho_1^{m}+b(\rho_1+\rho_2)^2,\qquad a,b>0,\]
then the above assumptions are satisfied on domains of the form $0\leq \rho_1\leq R$ for an arbitrary constant $R>0$.
\end{itemize}
}
\end{rem}

\noindent
All the interaction potentials $H_1, H_2, K_1, K_2$ are requested to satisfy 
\[H_i(-x)=H_i(x)\,,\qquad i=1,2,\]
plus the following assumptions ($H$ denotes any of the above four potentials in the next two conditions):
\begin{itemize}
\item [(HK1)] $H\in C(\Rd)\cap C^1(\R^d\setminus\{0\})$, $H(0)=0$. Moreover, there exist $C_1,C_2>0$ and $0<\alpha<2$ such that $-C_1 (1+|x|^{\alpha}) \leq H(x) \leq C_2(1+|x|^{2})$ for all $x\in \R^d$;
\item [(HK2)] $\Delta H$ is a Radon measure such that $\Delta H\le \bar{C}$ in $\mathfrak{D'}(\Rd)$ for some constant $\bar{C}$.
\end{itemize}

\noindent
More specific regularity assumptions are required on each potential.
The self-interaction potentials $H_1$ and $H_2$ satisfy ($i=1,2$):
\begin{itemize}
\item [(H1)] $\nabla H_i\in L_{loc}^{\infty}(\Rd)$ and $|\nabla H_i(x)|\le C(1+|x|),\ \forall x\in\Rd$.
\end{itemize}

\noindent
For the cross-interaction potentials $K_1$ and $K_2$, we additionally require (for $i=1,2$):
\begin{itemize}
\item [(K1)] $K_i$ and $\nabla K_i$ are globally Lipschitz on $\Rd$.
\end{itemize}
We notice that the symmetry of the potential is only required for the self-interaction potentials, not for the cross-interaction ones.

\smallskip
\noindent
While the assumptions on the interaction potentials are mainly involving some minimal regularity needed to perform suitable estimates, our requirements on the diffusion part amount essentially to assuming that the Hessian of $A$ has a dominant diagonal part with respect to the mixed terms. This models the situation in which the cross-diffusion part is weaker compared to the diagonal diffusion terms, which allows to detect better estimates on each single component. 

Our set of assumptions on the diffusion part is somewhat sharp in case one aims at achieving a diffusion-induced regularity provided by the dissipation of the functional $\mF$. This fact can be easily seen in the recent \cite{sorting}, in which discontinuous steady states are generated with $A(\rhou,\rhod)=(\rhou+\rhod)^2$, see similar results in \cite{zoology}. In those cases, a $BV$ estimate strategy must be developed, but there are no results allowing to do so at the moment for systems like \eqref{sist}. It is worth mentioning that the emergence of discontinuous "segregated" states as those studied in \cite{sorting} was previously detected in other situations in which nonlinear cross-diffusion couples with lower order effects, such as reaction terms, see e.g. \cite{bertsch}. 

The paper is structured as follows. In Section \ref{pre} we recall some preliminaries on optimal transport theory needed to perform the Jordan-Kinderlehrer-Otto scheme. In Section \ref{eulereq} we state and prove our existence result, the precise statement of which is provided in Theorem \ref{mainthm}.

\section{Preliminaries on optimal transport theory}\label{pre}
In the following, for a given integer $d\in\mathbb{N}$, we shall denote with $\mP$ the space of all probability measures on $\Rd$ and with $\mpd$ the set of all probability measures with finite second moment, i.e.
$$
\mpd=\left\{\rho\in\mP:m_2(\rho)<+\infty\right\},
$$
where
$$m_2(\rho)=\int_{\Rd}|x|^2\,d\rho(x).$$
Consider now a measure $\rho\in\mP$ and a Borel map $T:\Rd\to\Rn$. We denote by $T_{\#}\rho$ the push-forward of $\rho$ through $T$, defined by
$$
\int_{\Rn}f(y)\,dT_{\#}\rho(y)=\int_{\Rd}f(T(x))\,d\rho(x) \qquad \mbox{for all $f$ Borel functions on}\ \Rn.
$$
Let us recall the $2$-Wasserstein distance between $\mu_1,\mu_2\in \mpd$ defined by
\begin{equation}\label{wass}
W_2^2(\mu_1,\mu_2)=\min_{\gamma\in\Gamma(\mu_1,\mu_2)}\left\{\int_{\Rdd}|x-y|^2\,d\gamma(x,y)\right\},
\end{equation}
where $\Gamma(\mu_1,\mu_2)$ is the class of all transport plans between $\mu_1$ and $\mu_2$, that is the class of measures $\gamma\in\mpdd$ such that, denoting by $\pi_i$ the projection operator on the $i$-th component of the product space, the marginality condition
$$
(\pi_i)_{\#}\gamma=\mu_i \quad \mbox{for}\ i=1,2
$$
is satisfied. Setting $\Gamma_0(\mu_1,\mu_2)$ as the class of optimal plans, i.e. minimizers of \eqref{wass}, we can write the Wasserstein distance as
$$
W_2^2(\mu_1,\mu_2)=\int_{\Rdd}|x-y|^2\,d\gamma(x,y), \qquad \gamma\in\Gamma_0(\mu_1,\mu_2).
$$
\begin{rem}\label{momp}
\emph{From the definition of $W_2$ and from the inequality $|y|^2\le2|x|^2+2|x-y|^2$ it is possible to deduce that
$$
m_2(\rhou)\le2m_2(\rho_0)+2W_2^2(\rho_0,\rhou), \qquad \forall \rho_0,\rhou\in\mpd.
$$}
\end{rem}\noindent
The space $(\mpd,W_2)$ is a complete metric space and it can be seen as a length space (see for instance \cite{AGS},\cite{V1},\cite{V2}). For our purpose it is important to recall the definition of $\lambda$-geodesically convex functionals in $\mpd$ and other related concepts. First of all, a curve $\mu:[0,1]\to\mpd$ is a constant speed geodesic if $W_2(\mu(s),\mu(t))=(t-s)W_2(\mu(0),\mu(1))$ for $0\le s\le t\le1$; due \cite[Theorem 7.2.2]{AGS}, a constant speed geodesic connecting $\mu_1$ and $\mu_2$ can be written as
$$
\gamma_t=((1-t)\pi_1+t\pi_2)_{\#}\gamma,
$$
where $\gamma\in\Gamma_0(\mu_1,\mu_2)$. A functional $\phi:\mpd\to(-\infty,+\infty]$ is $\lambda$-geodesically convex in $\mpd$ if for every pair $\mu_1,\mu_2\in\mpd$ there exists $\gamma\in\Gamma_0(\mu_1,\mu_2)$ and $\lambda\in\R$ such that
$$
\phi(\gamma_t)\le(1-t)\phi(\mu_1)+t\phi(\mu_2)-\frac{\lambda}{2}t(1-t)W_2^2(\mu_1,\mu_2) \qquad \forall t\in[0,1].
$$
Now, let us mention the concept of \textit{k-flow}, which is linked with the $\lambda$-convexity along geodesics.
\begin{defn}[k-flow]
A semigroup $S_{H}:[0,+\infty]\times\mpd\to\mpd$ is a $k$-flow for a functional $H:\mpd\to\R\cup\{+\infty\}$ with respect to $W_2$ if, for an arbitrary $\rho\in\mpd$, the curve $t\mapsto S_H^t\rho$ is absolutely continuous on $[0,+\infty[$ and satisfies the evolution variational inequality (EVI)
\begin{equation}
\frac{1}{2}\frac{d^+}{dt}W_2^2(S_H^t\rho,\tilde{\rho})+\frac{k}{2}W_2^2(S_H^t\rho,\tilde{\rho})\le H(\tilde{\rho})-H(S_H^t\rho)
\end{equation}
for all $t>0$, with respect to every reference measure $\tilde{\rho}\in\mpd$ such that $H(\tilde{\rho})<\infty$.
\end{defn}
\begin{rem}
\emph{Just for the sake of completeness, we specify that saying $H$ admits a $k$-flow is equivalent to the $\lambda$-convexity of $H$ along geodesics, cf.\cite{AGS,DFM,MMCS} for further details.}
\end{rem}
\noindent
The previous definition will be applied in section \ref{eulereq} to the entropy functional $H=\int_{\Rd}\rho(x)\log\rho(x)\,dx$, which possesses a $0$-flow $S_H$ given by the heat semigroup (cf. \cite{JKO,V1,DS}). Indeed, the curve $t\mapsto \eta(t):=\bm{S}_H^t\eta_0$ solves the heat equation with a given initial datum $\eta_0\in\mpd$ in the classical sense.
\begin{rem}\label{prod}
\emph{Since we are dealing with the evolution of two species, we need to work in the product space $\mpd\times\mpd$ equipped with a product metric. We shall denote the elements of a product space using bold symbols, for instance
$$
\bm{\rho}=(\rhou,\rhod)\in\mpd\times\mpd
$$
or
$$
\bm{x}=(x_1,x_2)\in\Rd\times\Rd.
$$
The Wasserstein distance of order two in the product space is defined as follows
$$
\mW_2^2(\bm{\mu},\bm{\nu})=W_2^2(\mu_1,\nu_1)+W_2^2(\mu_2,\nu_2)
$$
for all $\bm{\mu},\bm{\nu}\in\mpd\times\mpd$.}
\end{rem}
\noindent
At the end of this section we recall the refined version of the Aubin-Lions Lemma due to Rossi and Savar\'{e} \cite[Theorem 2]{RS}.
\begin{thm}\label{aulirs}
Let $X$ be a Banach space. Consider
\begin{itemize}
\item a lower semi-continuous functional $\mF:X\to[0,+\infty]$ with relatively compact sublevels in $X$;
\item a pseudo-distance $g:X\times X\to[0,+\infty]$, i.e. $g$ lower semi-continuous and such that $g(\rho,\eta)=0$ for any $\rho,\eta\in X$ with $\mF(\rho)<\infty$, $\mF(\eta)<\infty$ implies $\rho=\eta$.
\end{itemize}
Let $U$ be a set of measurable functions $u:(0,T)\to X$, with a fixed $T>0$. Assume further that
\begin{equation}\label{hprossav}
\sup_{u\in U}\int_{0}^T\mF(u(t))\,dt<\infty\quad \text{and}\quad \lim_{h\downarrow0}\sup_{u\in U}\int_{0}^{T-h}g(u(t+h),u(t))\,dt=0\,.
\end{equation}
Then $U$ contains an infinite sequence $(u_n)_{n\in\mathbb{N}}$ that converges in measure (with respect to $t\in(0,T)$) to a measurable $\tilde{u}:(0,T)\to X$.
\end{thm}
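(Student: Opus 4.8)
The statement is the refined Aubin--Lions--Simon lemma of Rossi and Savar\'e, so the plan is to reproduce their argument. The two hypotheses in \eqref{hprossav} are complementary: the uniform bound on $\int_0^T\mF(u(t))\,dt$, via Markov's inequality, forces every $u\in U$ to spend almost all of its time inside a fixed compact subset of $X$, while the uniform smallness of $\int_0^{T-h}g(u(t+h),u(t))\,dt$ is an equicontinuity-in-time statement, but measured only through the weak pseudo-distance $g$. The whole point is to fuse these two facts into genuine relative compactness of $U$ in $L^0((0,T);X)$ endowed with the metric $d_Y(u,v):=\int_0^T\min\{1,\|u(t)-v(t)\|_X\}\,dt$, which is complete because $X$ is Banach and whose convergence is exactly convergence in measure.

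First I would record the \emph{tightness in time}: since $\mF$ is lower semicontinuous, each sublevel $\{\mF\le M\}$ is closed, hence (being relatively compact) compact; and Markov's inequality applied to the measurable map $t\mapsto\mF(u(t))$ gives $|\{t:u(t)\notin\{\mF\le M\}\}|\le M^{-1}\sup_{u\in U}\int_0^T\mF(u(t))\,dt$, uniformly in $u\in U$. So for each $\delta>0$ there is a compact $K_\delta\subset X$ outside of which each $u\in U$ spends time at most $\delta$. Next comes the \emph{key ``upgrade'' lemma}: for every $\ee>0$ and $M>0$ there is $\beta(\ee,M)>0$ such that $\mF(v),\mF(w)\le M$ and $g(v,w)\le\beta$ imply $\|v-w\|_X\le\ee$. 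This follows by contradiction: a sequence $v_n,w_n$ in the compact set $\{\mF\le M\}$ with $g(v_n,w_n)\to0$ but $\|v_n-w_n\|_X>\ee$ would subconverge to $v,w$ with $\mF(v),\mF(w)\le M$ by lower semicontinuity of $\mF$ and, by lower semicontinuity of $g$, $g(v,w)=0$, hence $v=w$ by the separation property of $g$ --- contradicting $\|v-w\|_X\ge\ee$. This is the only place the qualitative hypothesis on $g$ enters, and it is the conceptual heart of the proof.

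With that lemma in hand I would fix $\ee>0$, set $\delta=\ee$, take $M$ as in the tightness step and $\beta=\beta(\ee,M)$, and use the second hypothesis in \eqref{hprossav} to pick $h_0>0$ with $\sup_{u\in U}\int_0^{T-h}g(u(t+h),u(t))\,dt\le\beta\delta$ for $h\le h_0$. Markov again bounds the measure of $\{t:g(u(t+h),u(t))>\beta\}$ by $\delta$, and off a set of measure at most $3\delta$ --- adding the two exceptional sets on which $u(t)$ or $u(t+h)$ leaves $K_\delta$ --- the lemma gives $\|u(t+h)-u(t)\|_X\le\ee$, so that $\sup_{u\in U}\int_0^{T-h}\min\{1,\|u(t+h)-u(t)\|_X\}\,dt\le(T+3)\ee$ for $h\le h_0$. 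Thus $U$ now satisfies the two ingredients of a Fr\'echet--Kolmogorov-type criterion in $L^0((0,T);X)$: spatial tightness and uniform integral equicontinuity under time translation.

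The concluding step is to deduce from these two uniform bounds that $U$ is totally bounded for $d_Y$, hence --- by completeness --- relatively sequentially compact for convergence in measure, which furnishes the sequence $(u_n)$ and its limit $\tilde u\in L^0((0,T);X)$. This last step is the technical obstacle: it requires invoking (or re-deriving) the Fr\'echet--Kolmogorov compactness criterion for Banach-valued functions with convergence in measure, where the delicate point is that no uniform $L^p$-in-time bound is assumed, so one must first truncate each $u$ outside $K_\delta$ to gain Bochner integrability before time-averaging on a fine partition of $(0,T)$ and comparing with the totally bounded family of step functions valued in a compact set. Since the statement is exactly \cite[Theorem~2]{RS}, for the purposes of the paper it suffices to cite that reference, the above being the argument behind it.
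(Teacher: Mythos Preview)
The paper does not prove this theorem: it is stated in Section~\ref{pre} purely as a preliminary result and attributed to Rossi and Savar\'e \cite[Theorem~2]{RS}, with no argument given. Your proposal correctly identifies this, and your sketch of the Rossi--Savar\'e proof (tightness via Markov on the $\mF$-sublevels, the compactness ``upgrade'' lemma turning $g$-closeness into $X$-closeness on compacta, and then a Fr\'echet--Kolmogorov-type conclusion in $L^0((0,T);X)$) is an accurate outline of the original argument; for the purposes of this paper, the citation alone suffices.
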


\section{Existence of solutions}\label{eulereq}
If we do not consider the diffusion in \eqref{sist}, we fall in the case of purely nonlocal interaction systems, already treated in \cite{DFF}. Summarising, if the cross-interaction potentials are proportional ($K_2=\alpha K_1$, for $\alpha>0$), one can use the theory of gradient flows on probability spaces developed in \cite{AGS},\ \cite{AS}
and \cite{CDFFLS} for nonlocal interaction equations with one species. Otherwise, a \emph{semi-implicit} version of the JKO scheme \cite{DFF} can be adopted to construct solutions in way to stick as much as possible to a variational Wasserstein-based structure, as discussed in the introduction, see \eqref{sistGF2}. Such procedure consists in freezing the non symmetric part of the system and treat it as an external potential. This strategy will be performed in this section, with the goal of detecting weak solutions for \eqref{sist}.

\begin{defn}\label{defweaksolution}
A curve $\bm{\rho}=(\rhou(\cdot),\rhod(\cdot)):[0,+\infty)\longrightarrow\mpd^2$ is a weak solution to \eqref{sist} if 
\begin{itemize}
\item[(i)] $\rho_i\in L^{\alpha_i}([0,T]\times \R^d)$ with $\alpha_i\in (1,M_i)$ and $M_i$ defined in \eqref{eqMi}, for $i=1,2$ and for all $T>0$,
\item [(ii)] $\nabla \rho_i^{m_i/2} \in L^2([0,+\infty)\times \R^d)$ for $i=1,2$,
\item[(iii)] for almost every $t\in [0,+\infty)$ and for all $\phi,\varphi\in C_c^{\infty}(\Rd)$, we have
\begin{equation*}
\begin{split}
\ddt\int\phi(x)\,d\rhou(t,x)&=-\int\nabla A_{\rhou}\cdot\nabla\phi(x)\,d\rhou(x)-\int\int\nabla K_1(x-y)\cdot\nabla\phi(x)\,d\rhod(y)d\rhou(x)\\&-\frac{1}{2}\int\int\nabla H_1(x-y)\cdot(\nabla\phi(x)-\nabla\phi(y))\,d\rhou(y)d\rhou(x)
\end{split}
\end{equation*}
\begin{equation*}
\begin{split}
\ddt\int\varphi(x)\,d\rhod(t,x)&=-\int\nabla A_{\rhod}\cdot\nabla\varphi(x)\,d\rhod(x)-\int\int\nabla K_2(x-y)\cdot\nabla\varphi(x)\,d\rhou(y)d\rhod(x)\\&-\frac{1}{2}\int\int\nabla H_2(x-y)\cdot(\nabla\varphi(x)-\nabla\varphi(y))\,d\rhod(y)d\rhod(x)
\end{split}
\end{equation*}
\end{itemize}
\end{defn}
\noindent

We now introduce the \textit{relative energy} functional $\mF$. Let $\bm{\nu}\in\mpd^2$ be a fixed (time independent) measure; for all $\bm{\mu}\in\mpd^2$ we set
\begin{equation*}
\begin{split}
\mF[\mmu|\nnu]&:=\int_{\Rd}A(\mu_1,\mu_2)\,dx + \frac{1}{2}\int_{\Rd}H_1 * \mu_1\,d\mu_1 + \int_{\Rd}K_1 * \nu_2\,d\mu_1\\&+\frac{1}{2}\int_{\Rd}H_2 * \mu_2\,d\mu_2 + \int_{\Rd}K_2 * \nu_1\,d\mu_2.
\end{split}
\end{equation*}
By abuse of notation, by $A(\mu_1,\mu_2)$ we mean $A$ evaluated on the Radon-Nicodym derivatives of $\mu_1$ and $\mu_2$ (respectively) with respect to the Lebesgue measure in case both $\mu_1$ and $\mu_2$ are $L^1$ densities, otherwise $A$ takes the value $+\infty$. With this notation the functional is well-defined since the interaction potentials are continuous and they grow at most quadratically. 

\noindent
We are now ready to state the main result of our paper.
\begin{thm}\label{mainthm}
Assume that (D1)-(D3), (HK1)-(HK2), (H1), and (K1) are satisfied. 
Let $\rrho_0=(\rho_{1,0},\rho_{2,0})\in \mpd^2$ such that
\[\mF[\rrho_0|\rrho_0]<+\infty.\]
Then, there exists a weak solution to \eqref{sist} in the sense of Definition \ref{defweaksolution}.
\end{thm}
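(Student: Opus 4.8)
The plan is to construct the weak solution via the semi-implicit JKO (minimizing movement) scheme associated with the relative energy $\mF[\cdot\,|\,\cdot]$. Fix a time step $\tau>0$ and set $\rrhotn$ inductively: given $\rrhotn=(\rhoutn,\rhodtn)$, let $\rrhotnn$ be a minimizer over $\mpd^2$ of
\[\mmu\mapsto \frac{1}{2\tau}\mW_2^2(\mmu,\rrhotn)+\mF[\mmu\,|\,\rrhotn].\]
The first block of steps establishes that this scheme is well posed: existence of minimizers follows from the direct method (lower semicontinuity of $\mW_2^2$, of the interaction terms under narrow convergence using (HK1), and of $\int A(\mu_1,\mu_2)\,dx$ by convexity of $A$ from (D1)--(D3) together with (\ref{exD2})), while coercivity of the sublevels is guaranteed by (\ref{exD2}) controlling the moments and the at-most-quadratic lower bound on the potentials. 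One then derives the discrete energy inequality: comparing the minimizer against the previous step gives
\[\frac{1}{2\tau}\mW_2^2(\rrhotnn,\rrhotn)+\mF[\rrhotnn\,|\,\rrhotn]\le \mF[\rrhotn\,|\,\rrhotn],\]
and — this is the point where the \emph{relative} structure matters — one must compare $\mF[\rrhotnn\,|\,\rrhotnn]$ with $\mF[\rrhotnn\,|\,\rrhotn]$, absorbing the error $\int K_1*(\rho_{2,\tau}^{n+1}-\rho_{2,\tau}^n)\,d\rho_{1,\tau}^{n+1}+\int K_2*(\rho_{1,\tau}^{n+1}-\rho_{1,\tau}^n)\,d\rho_{2,\tau}^{n+1}$ using the global Lipschitz bound (K1) and a Young-type estimate against $\frac{1}{2\tau}\mW_2^2(\rrhotnn,\rrhotn)$. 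Summing over $n$ yields a uniform-in-$\tau$ bound on the total energy $\mF[\rrhot(t)\,|\,\rrhot(t)]$ (hence on moments and on $\int A\,dx$, so on $\|\rho_{i,\tau}\|_{L^{m_i}}$) and on $\sum_n \mW_2^2(\rrhotnn,\rrhotn)/\tau$, giving the standard $\tfrac12$-Hölder estimate in $\mW_2$ for the piecewise-constant interpolant $\rrhot$.

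The next block extracts the crucial diffusive regularity. Here I would use the flow-interchange / De Giorgi variational-interpolation technique: perturb the minimizer $\rrhotnn$ along the heat semigroup $\su^s,\sd^s$ (which is the $0$-flow of the Boltzmann entropy, as recalled in the preliminaries), use it as a competitor in the minimization, and let $s\downarrow 0$. The dissipation of $\int A(\mu_1,\mu_2)\,dx$ along the heat flow, combined with the uniform convexity (D3) (which forces a pointwise lower bound of the form $c\,|\nabla\rho_{i}^{m_i/2}|^2$ on the integrand of the entropy-dissipation-type quantity), produces the estimate
\[\sum_n \tau\left(\|\nabla (\rho_{1,\tau}^{n+1})^{m_1/2}\|_{L^2}^2+\|\nabla(\rho_{2,\tau}^{n+1})^{m_2/2}\|_{L^2}^2\right)\le C,\]
uniformly in $\tau$; the interaction contributions to the flow-interchange estimate are controlled by (HK1)--(HK2) (the distributional upper bound on $\Delta H$ is exactly what is needed to sign the self-interaction term) and (H1), (K1). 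This gives conclusion (ii) of Definition \ref{defweaksolution}, and via the Gagliardo--Nirenberg--Sobolev inequality a uniform bound on $\rho_{i,\tau}$ in $L^{\alpha_i}_{t,x}$ for $\alpha_i$ as in (D2) — this is where the threshold $M_i=m_i(d+2)/d$ enters, and where assumption (D2) on the at-most-$\eta_1^a\eta_2^b$ growth of the second derivatives of $B$ is used to ensure the nonlinearities $\rho_i A_{\rho_i,\rho_j}$ are equi-integrable and pass to the limit.

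For compactness I would invoke the Rossi--Savaré refined Aubin--Lions lemma (Theorem \ref{aulirs}) with $X=L^1(\R^d)$ (or a weighted/ball-truncated version), taking the driving functional to be $\int (|x|^2\rho + A(\rho_1,\rho_2) + |\nabla\rho_i^{m_i/2}|^2)\,dx$ — whose sublevels are relatively compact by the bounds just obtained — and the pseudo-distance to be (a bounded modification of) $\mW_2$, whose time-regularity is the $\tfrac12$-Hölder estimate above. This yields a subsequence $\rrhot\to\rrho$ strongly in $L^1_{loc}$ in space-time, with $\rrho$ satisfying (i)--(ii). One then writes the discrete Euler--Lagrange equation for each step — testing the optimality of $\rrhotnn$ against perturbations by smooth vector fields / transport maps, which is standard and produces the discrete weak formulation with the usual $O(\tau)$ error — and passes to the limit in each term: the diffusion terms via the $L^2$ gradient bound plus strong $L^{\alpha_i}$ convergence, the interaction terms via (H1), (K1) and dominated convergence, and the time-derivative term in the distributional sense. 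The main obstacle, in my view, is the diffusive estimate: making the flow-interchange argument rigorous requires care because $A$ is not separable and the heat flow acts on each component, so the cross term $\int A_{\rho_1,\rho_2}\,\partial_s(\su^s\mu_1)(\sd^s\mu_2)\,dx$-type contributions must be handled — this is precisely what the ``dominant diagonal Hessian'' hypotheses (D2)--(D3) are engineered to control, so the work is in tracking the exponents in (D2) through the Gagliardo--Nirenberg estimates and verifying the borderline cases $\alpha_i<M_i$ close the argument.
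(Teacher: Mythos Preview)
Your outline is correct and matches the paper's proof essentially step for step: semi-implicit JKO scheme, energy estimate via the Lipschitz bound (K1) absorbed into the Wasserstein term, flow interchange along the heat semigroup to get $\nabla\rho_i^{m_i/2}\in L^2_{t,x}$, Rossi--Savar\'e compactness (the paper takes $X=L^{m_1}\times L^{m_2}$ rather than $L^1$, which makes the sublevel-compactness cleanest via Riesz--Fr\'echet--Kolmogorov on $\eta_i=\rho_i^{m_i/2}$), Gagliardo--Nirenberg to upgrade to $L^{\alpha_i}$, and then the discrete Euler--Lagrange equations.

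One correction of emphasis: you locate the main obstacle in the flow-interchange step and suggest (D2)--(D3) are both needed there to control the cross term. In fact the flow interchange is clean---(D3) alone, as a pointwise lower bound on the full quadratic form $\langle D^2A\,V,V\rangle$, already absorbs the off-diagonal contribution $A_{\rho_1\rho_2}\nabla(\su^t\rho_1)\cdot\nabla(\sd^t\rho_2)$ into the diagonal terms, and (HK2) handles the potentials. The place that is \emph{not} standard, and where (D2) actually does its work, is the Euler--Lagrange derivation for the diffusion term: perturbing only one component by $P^\ee=\id+\ee\zeta$ forces you to Taylor-expand $B$ and to control both the second-order remainder and the cross term $\int B_{\eta_2}(\eetatnn)\,[\etadtnn\!\circ\! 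P^\ee-\etadtnn]\,dx$; this requires the growth bounds in (D2), the $L^{\alpha_i}$ integrability from Gagliardo--Nirenberg, and a small technical lemma showing $(f\circ P^\ee-f)/\ee\to\zeta\cdot\nabla f$ in $L^2$ for $f\in H^1$. That step is where the non-separability of $A$ genuinely bites, so you should not wave it off as routine.
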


\noindent
We will prove the result in Theorem \ref{mainthm} in the current section. In order to improve the readability we split this section into several subsections, each one corresponding to each step of the proof.

\begin{rem}\label{splitfunc}
\emph{
Let us set
$$
\tilde{\mF}[\mmu]:=\int_{\Rd}A(\mu_1,\mu_2)\,dx + \frac{1}{2}\int_{\Rd}H_1 * \mu_1\,d\mu_1 + \frac{1}{2}\int_{\Rd}H_2 * \mu_2\,d\mu_2,
$$
and
$$
\mK[\mmu|\nnu]:=\int_{\Rd}K_1 * \nu_2\,d\mu_1 + \int_{\Rd}K_2 * \nu_1\,d\mu_2.
$$
Then, we can rewrite $\mF$ as the sum of the previous functionals, i.e.
$$
\mF[\mmu|\nnu]=\tilde{\mF}[\mmu]+\mK[\mmu|\nnu].
$$
Such a notation clarifies the expression "semi-implicit" used throughout the paper, as the part $\tilde{\mF}$ is treated implicitly in the JKO scheme as usual, whereas $\mK[\mmu|\nnu]$ contains terms that are treated explicitly in the time-discretization.}
\end{rem}\noindent

Once we have the \textit{relative energy} functional $\mF$, we can develop the semi-implicit JKO scheme in the spirit of \cite{DFF}. Let $\tau>0$ be a fixed time step and let $\rrho_0=(\rho_{0,1},\rho_{0,2})\in\mpd^2$ be a fixed initial datum such that $\mF[\rrho_0|\rrho_0]<+\infty$. We define a sequence $\{\rrhotn\}_{n\in\mathbb{N}}$ recursively: $\rrhot^0=\rrho_0$ and, for a given $\rrhotn\in\mpd^2$ with $n\geq 0$, we choose $\rrhotnn$ as follows:
\begin{equation}\label{jko}
\rrhotnn\in\argmin_{\rrho\in\mpd^2}\left\{\frac{1}{2\tau}\mW_2^2(\rrhotn,\rrho)+\mF[\rrho|\rrhotn]\right\}.
\end{equation}\noindent
The well-posedness of the sequence \eqref{jko} can be obtained using the same argument in \cite[Lemma 2.3 and Proposition 2.5]{CDFFLS} for each component, since the interaction potentials have a ``suitable'' control from below, the diffusive part of the functional is nonnegative and the integrand $A$ is a nonnegative $C^2$ function.

\begin{rem}
\emph{We assume that the interaction potentials are controlled from below by $C_1(1+|x|^\alpha)$ for $0<\alpha<2$ and $C_1<0$. Such assumption implies that, for a given $\nnu\in\mpd^2$, the functional
$$
\rrho\in\mpd^2\longrightarrow\frac{1}{2\tau}\mW_2^2(\nnu,\rrho)+\mF[\rrho|\nnu]
$$
is bounded from below if $\tau$ is small enough. Indeed, one can use the H\"{o}lder and the (weighted) Young inequalities together with Remark \ref{momp} with $\rrho$ and $\nnu$, in order to have
$$
\frac{1}{2\tau}\mW_2^2(\nnu,\rrho)+\mF[\rrho|\nnu]\ge-C m_2(\nnu)-C,
$$
for a suitable constant $C$ possibly depending on $\alpha$. This provides the boundedness from below needed in the well-posedness of the scheme \eqref{jko}.}
\end{rem}

\subsection{Convergence of the scheme}
A crucial issue in our paper is proving that an appropriate time-interpolation of the sequence defined by the scheme \eqref{jko} converges to a weak measure solution of \eqref{sist}. At this point let us consider the following piecewise constant interpolation defined as follows. Let $T>0$ be fixed. Let $N:=\left[\frac{T}{\tau}\right]$. We set
$$
\rhoit(t)=\rhoitn \qquad t\in((n-1)\tau,n\tau],
$$
with $\rrhotn=(\rhoutn,\rhodtn)$ defined in \eqref{jko}. 
\begin{prop}\label{propaa}
There exists an absolutely continuous curve $\tilde{\rrho}: [0,T]\rightarrow\mpd^2$ such that the piecewise constant interpolation $\rrhot$ admits a subsequence $\rrho_{\tau_k}$ narrowly converging to $\tilde{\rrho}$ uniformly in $t\in[0,T]$ as $k\rightarrow +\infty$.
\end{prop}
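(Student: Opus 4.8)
The plan is to run the classical ``minimizing movements'' argument of \cite{AGS,CDFFLS}, adapted to the semi-implicit scheme \eqref{jko} as in \cite{DFF}, and then to invoke a refined Ascoli--Arzel\`a theorem. The starting point is the discrete energy--dissipation inequality obtained by testing the minimization \eqref{jko} with the competitor $\rrho=\rrhotn$:
\[
\frac{1}{2\tau}\mW_2^2(\rrhotn,\rrhotnn)+\mF[\rrhotnn\,|\,\rrhotn]\le\mF[\rrhotn\,|\,\rrhotn].
\]
The genuinely new feature with respect to the fully implicit case is that the reference measure in the relative energy is updated at every step, so to telescope I rewrite $\mF[\rrhotnn\,|\,\rrhotn]=\mF[\rrhotnn\,|\,\rrhotnn]-\bigl(\mK[\rrhotnn\,|\,\rrhotnn]-\mK[\rrhotnn\,|\,\rrhotn]\bigr)$. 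By (K1) the potentials $K_1,K_2$ are globally Lipschitz, so Kantorovich--Rubinstein duality (and $W_1\le W_2$) bounds this error by $C\,\mW_2(\rrhotn,\rrhotnn)$, which a weighted Young inequality absorbs into the dissipation, giving
\[
\frac{1}{4\tau}\mW_2^2(\rrhotn,\rrhotnn)+\mF[\rrhotnn\,|\,\rrhotnn]\le\mF[\rrhotn\,|\,\rrhotnn-1]+C^2\tau\qquad\text{(with }\rrhotnn-1:=\rrhotn\text{)}.
\]

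Summing over $n$ with $n\tau\le T$ and using that $\mF[\cdot\,|\,\cdot]$ is bounded below by $-C(1+m_2(\cdot))$ — which follows from the lower bound and sublinear growth $\alpha<2$ in (HK1), the nonnegativity of the diffusion term \eqref{exD2}, and the $K_i(0)=0$ Lipschitz control of the cross terms — together with Remark~\ref{momp} relating second moments to transport distances, I obtain the two uniform-in-$\tau$ bounds
\[
\sup_{\tau}\ \sum_{n\tau\le T}\frac{1}{2\tau}\mW_2^2(\rrhotn,\rrhotnn)\le C(T)\qquad\text{and}\qquad\sup_{\tau}\ \sup_{n\tau\le T}m_2(\rrhotn)\le C(T).
\]
Since no confining potential is present, the second-moment bound is not immediate: one closes it by a discrete Gronwall argument, bootstrapping the estimate on a short time interval and iterating up to $T$ (the interaction energy being controlled from below only modulo $m_2$).

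Next comes the equicontinuity and compactness. For $0\le s\le t\le T$, the triangle inequality and Cauchy--Schwarz give, for the piecewise constant interpolant,
\begin{align*}
\mW_2(\rrhot(t),\rrhot(s))&\le\sum_{n\tau\in(s,t]}\mW_2(\rrhotn,\rrhotnn)\\
&\le\Bigl(\tfrac{|t-s|}{\tau}+1\Bigr)^{1/2}\Bigl(\sum_{n\tau\le T}\mW_2^2(\rrhotn,\rrhotnn)\Bigr)^{1/2}\le C(T)\bigl(\sqrt{|t-s|}+\sqrt{\tau}\bigr),
\end{align*}
so $\{\rrhot\}_{\tau}$ is asymptotically uniformly $\tfrac12$-H\"older in $(\mpd^2,\mW_2)$, while the uniform moment bound makes $\{\rrhot(t):t\in[0,T],\ \tau>0\}$ tight, hence relatively compact for the narrow topology. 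A refined Ascoli--Arzel\`a theorem (\cite[Prop.~3.3.1]{AGS}) then yields a vanishing subsequence $\tau_k$ and a curve $\tilde{\rrho}:[0,T]\to\mpd^2$ with $\rrho_{\tau_k}(t)\to\tilde{\rrho}(t)$ narrowly, uniformly in $t\in[0,T]$; passing to the limit in the H\"older estimate shows $\tilde{\rrho}$ is $\tfrac12$-H\"older, in particular absolutely continuous, on $[0,T]$.

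I expect the main obstacle to lie in the a priori estimates, not in the final compactness step. Concretely, one must verify that freezing the non-symmetric cross-interaction terms introduces only a \emph{first-order} error in $\mW_2(\rrhotn,\rrhotnn)$ (so that it is absorbable into the dissipation), and then close the second-moment estimate uniformly in $\tau$ in the absence of any confining mechanism, which forces the Gronwall/bootstrap device above. Once these are in place, the interpolation inequality and the refined Ascoli--Arzel\`a theorem give the statement essentially for free.
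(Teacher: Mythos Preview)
Your proposal is correct and follows essentially the same route as the paper: the minimality inequality, Lipschitz control of the frozen cross-interaction error absorbed via a weighted Young inequality, telescoping, a second-moment bound, approximate $\tfrac12$-H\"older equicontinuity, and the refined Ascoli--Arzel\`a theorem of \cite[Sect.~3]{AGS}. The only noteworthy difference is that the paper avoids your discrete Gronwall/bootstrap step: by exploiting the \emph{sub-quadratic} growth exponent $\alpha<2$ in (HK1), the moment-dependent lower bound on $\tilde{\mF}$ is absorbed directly into the summed Wasserstein dissipation via a weighted Young inequality, so the second-moment estimate closes in one shot without iteration.
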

\begin{proof}
By the definition of the scheme \eqref{jko} we have
\begin{equation}\label{eq:scheme1}
\begin{split}
&\frac{1}{2\tau}\mW_2^2(\rrhotn,\rrhotnn)\le\mF[\rrhotn|\rrhotn]-\mF[\rrhotnn|\rrhotn]\\&=\int_{\Rd}A(\rhoutn,\rhodtn)\,dx-\int_{\Rd}A(\rhoutnn,\rhodtnn)\,dx\quad +\\&+\sum_{i=1}^{2}\frac{1}{2}\left(\int_{\Rd}H_i * \rhoitn\,d\rhoitn-\int_{\Rd}H_i * \rhoitnn\,d\rhoitnn\right)\quad +\\&+\sum_{i\ne j}\left(\int_{\Rd}K_i * \rhojtn\,d\rhoitn-\int_{\Rd}K_i * \rhojtn\,d\rhoitnn\right).
\end{split}
\end{equation}
Reasoning as in \cite{DFF}, we can use the Lipschitz's hypothesis on $K_i$ as follows for $i,j=1,2$, $i\neq j$. First we use the symmetry of $K$ to get
\begin{align*}
 & \left|\int_{\Rd}K_i * \rhojtn\,d\rhoitn-\int_{\Rd}K_i * \rhojtn\,d\rhoitnn \right|\\
 & \ = \left|\iint_{\Rd\times\Rd}K_i(x-y)\,d\rhojtn(y)\,d\rhoitn(x)-\iint_{\Rd\times\Rd} K_i(t-y)\,d\rhojtn(y)\,d\rhoitnn(t)\right|\\
 & \ = \left|\iiint_{\R^d\times\R^d\times\R^d}(K_i(x-y)-K_i(t-y))\,d \gamma_{i,\tau}^n(x,t)\,d \rhojtn(y)\right|,
\end{align*}
where $\gamma^n_{i,\tau}\in \Gamma_o(\rhoitn,\rhoitnn)$ is an optimal transport plan connecting $\rhoitn$ to $\rhoitnn$. Now, due to the Lipschitz condition (K1) we get
\begin{align*}
 &  \left|\iiint_{\R^d\times\R^d\times\R^d}(K_i(x-y)-K_i(t-y))\,d \gamma_{i,\tau}^n(x,t)\,d \rhojtn(y)\right| \\
 & \ \leq \mathrm{Lip}(K_i)\iiint_{\R^d\times\R^d\times\R^d} |x-t|\,d \gamma_{i,\tau}^n(x,t)\,d \rhojtn(y)\\
 & \ \leq  \mathrm{Lip}(K_i) \mW_2(\rhoitn,\rhoitnn) \leq \frac{1}{4\tau}\mW_2^2(\rhoitn,\rhoitnn) + C\tau,
\end{align*}
for some constant $C>0$ independent of $\tau$. Substituting the above estimate into \eqref{eq:scheme1}, we get
\begin{equation*}
\begin{split}
\frac{1}{4\tau}\mW_2^2(\rrhotn,\rrhotnn)&\le\int_{\Rd}A(\rhoutn,\rhodtn)\,dx-\int_{\Rd}A(\rhoutnn,\rhodtnn)\,dx\quad+\\&+\sum_{i=1}^{2}\frac{1}{2}\left(\int_{\Rd}H_i * \rhoitn\,d\rhoitn-\int_{\Rd}H_i * \rhoitnn\,d\rhoitnn\right) + C\tau.
\end{split}
\end{equation*}
With the notation in Remark \ref{splitfunc}, let us rewrite the previous estimate as
\begin{equation}
\frac{1}{4\tau}\mW_2^2(\rrhotn,\rrhotnn)\le\tilde{\mF}[\rrhotn]-\tilde{\mF}[\rrhotnn]+C\tau,
\end{equation}
which implies
\begin{equation}\label{ftilde}
\tilde{\mF}[\rrhotn]\le\tilde{\mF}[\rrho_0]+CT, \qquad \forall n\in\mathbb{N}.
\end{equation}
Summing over $k$ from $m$ to $n$ with $m<n$ we get

\begin{equation}\label{telescopic}
\begin{split}
\frac{1}{4\tau}\sum_{k=m}^{n}\mW_2^2(\rrho_{\tau}^{k},\rrho_{\tau}^{k+1})&\le\int_{\Rd}A(\rhout^m,\rhodt^m)\,dx-\int_{\Rd}A(\rhoutnn,\rhodtnn)\,dx\\&+\sum_{i=1}^{2}\frac{1}{2}\left(\int_{\Rd}H_i * \rhoit^m\,d\rhoit^m-\int_{\Rd}H_i * \rhoitnn\,d\rhoitnn\right)\\&+ C(n-m+1)\tau.
\end{split}
\end{equation}\noindent
Thanks to the estimates \eqref{ftilde} and \eqref{telescopic} we obtain
\begin{equation}\label{firstdistance}
\mW_2^2(\rrhot(0),\rrhot(t))\le C(\rrho_0)n\tau+Cn^2\tau^2+m_2(\rrho_0)+C(T,\alpha)
\end{equation}
by using H\"{o}lder inequality and (weighted) Young inequality, taking into account that $A\ge0$, the control from below on $H_i$ and Remark \ref{momp}.
The estimate \eqref{firstdistance} and Remark \ref{momp} allow us to conclude that the second moment of $\rrho_{\tau}(t)$ is uniformly bounded on compact time intervals.\\
Thanks to \eqref{ftilde} and \eqref{firstdistance}, we can improve the estimate \eqref{telescopic} in the following way: 
\begin{equation}\label{telescopic1}
\sum_{k=m}^{n-1}\mW_2^2(\rrho_{\tau}^{k},\rrho_{\tau}^{k+1})\le\overline{C}\tau+C(n-m)\tau^2,
\end{equation}
where $\overline{C}=\overline{C}(\rrho_0,T,\alpha)$.
Now, let us consider $0\le s<t$ such that $s\in((m-1)\tau,m\tau]$ and $t\in((n-1)\tau,n\tau]$ (which implies $|n-m|<\frac{|t-s|}{\tau}+1$); by Cauchy-Schwartz inequality, \eqref{firstdistance} and \eqref{telescopic1} we obtain

\begin{equation}\label{holdercont}
\begin{split}
\mW_2(\rrhot(s),\rrhot(t))&\le\sum_{k=m}^{n-1}\mW_2(\rrho_{\tau}^{k},\rrho_{\tau}^{k+1})\le\left(\sum_{k=m}^{n-1}\mW_2^2(\rrho_{\tau}^{k},\rrho_{\tau}^{k+1})\right)^{\frac{1}{2}}|n-m|^{\frac{1}{2}}\\&\le c\ \sqrt{1+T}\left(\sqrt{|t-s|}+\sqrt{\tau}\right),
\end{split}
\end{equation}\noindent
where $c$ is a positive constant.
This means that $\rrhot$ is $\frac{1}{2}$-H\"{o}lder equi-continuous (up to a negligible error of order $\sqrt{\tau}$) and then we obtain the uniform narrow compactness of $\rrhot$ on compact time intervals by using a refined version of Ascoli-Arzel\`{a}'s theorem (see \cite{AGS}, Section 3).
\end{proof}

\begin{rem}\label{lmbound}
\emph{An important consequence following from \eqref{exD2}, \eqref{ftilde}, and \eqref{firstdistance} is that
\begin{equation}
\sup_{t\in[0,T]}\left[||\rhoutn||_{L^{m_1}(\Rd)}^{m_1} + ||\rhodtn||_{L^{m_2}(\Rd)}^{m_2}\right]\le\tilde{\mF}[\rrho_0] + C(T,\rrho_0), \qquad \forall n\in\mathbb{N}.
\end{equation}}
\end{rem}\noindent
Please notice that all the measures involved in the above calculations are absolutely continuous with respect to Lebesgue measure, which comes as a trivial consequence of the fact that these measures minimize the JKO scheme and therefore the diffusive term of the functional $\mF$ must be finite.

\subsection{Flow interchange}
The uniform-in-time narrow convergence of our approximating sequence $\rrhot$ is not strong enough to pass to the limit and get the weak formulation of our system \eqref{sist}. Due to the nonlinearities involved, we need to prove strong compactness in suitable $L^p$ spaces for $\rhou$ and $\rhod$. This problem can be solved by means of the so-called ``flow interchange'' technique developed by Matthes, McCann and Savar\'{e} (\cite{MMCS}) and used later on by several authors. The seminal idea behind this technique is that \textit{the dissipation
of one functional along the gradient flow of another functional equals the dissipation of the second
functional along the gradient flow of the first}. By computing variations as solutions to a certain gradient flow, one can use the ``Evolution Variational Inequality'' (EVI) to obtain refined estimates and the desired compactness. It is well-known that the heat equation can be regarded as a steepest descent of the opposite of the Boltzmann entropy (\cite{JKO}), i.e. $\int_{\Rd}\rho(x)\log\rho(x)\,dx$. This can be easily extended to the multi-species case by reformulating the problem in the product space $(\mpd\times\mpd,\mW_2)$. Hence, the system
\begin{equation}\label{hs}
\begin{cases}
\partial_{t}\eta_1=\Delta\eta_1 \\
\partial_{t}\eta_2=\Delta\eta_2
\end{cases}
\end{equation}
can be seen as the gradient flow of the functional
\begin{equation*}
\mE[\eta_1,\eta_2]=\int_{\Rd}[\eta_1(x)\log\eta_1(x)+\eta_2(x)\log\eta_2(x)]\,dx,
\end{equation*}
with respect to the 2-Wasserstein distance $\mW_2$.
\begin{rem}\label{controlbelowentropy}
\emph{Let us call the entropy $\mE(\rho)=\int_{\Rd}\rho(x)\log\rho(x)\,dx$. From the seminal work of Jordan, Kinderlehrer and Otto (\cite[Proposition 4.1]{JKO}), it comes out that the entropy is controlled from below by the second momentum $m_2(\rho)$, i.e.
$$
\mE(\rho)\ge -C(m_2(\rho) + 1)^{\beta},
$$
for every $\rho\in\mpda$, $\beta\in(\frac{d}{d+2},1)$ and $C<+\infty$, depending only on the space dimension $d$. We will use this inequality in order to have a uniform bound from below for the entropy.}
\end{rem}\noindent
We want to develop the flow interchange strategy using \eqref{hs}, so the auxiliary functional we are going to use is
\begin{equation}\label{af}
\mE[\eta_1,\eta_2]=
\begin{cases}
\int_{\Rd}[\eta_1(x)\log\eta_1(x)+\eta_2(x)\log\eta_2(x)]\,dx, &\eta_1\log\eta_1,\eta_2\log\eta_2\in L^1(\Rd);\\
+\infty & \text{otherwise}.
\end{cases}
\end{equation}\normalsize
As we recalled in the section \ref{pre}, the entropy functional possesses a $0$-flow given by the heat semigroup; therefore, let us consider $\bm{\nu}=(\nu_1,\nu_2)\in\mpd^2$ such that $\mE(\nu_1,\nu_2)<+\infty$ and let us denote by $\bm{S}_{\mE}=(\su,\sd)$ the $0$-flow associated to $\mE$. In particular, denoting by $$
\su^t\nu_1:=\eta_1(t,\cdot)\quad \text{and}\quad \sd^t\nu_2:=\eta_2(t,\cdot),
$$
we have that $\bm{\eta}(t,\cdot)=(\eta_1(t,\cdot),\eta_2(t,\cdot))$ is the solution at time $t$ of the system \eqref{hs} coupled with an initial value $\bm{\nu}$ at $t=0$. For every $\rrho\in\mpd^2$ and every given $\mmu\in\mpd^2$, let us define the dissipation of $\mF$ along $\bm{S}_{\mE}$ by
$$
\bm{D}_{\mE}(\rrho|\mmu):=\limsup_{s\downarrow0}\left\{\frac{\mF[\rrho|\mmu]-\mF[\bm{S}_{\mE}^s\rrho|\mmu]}{s}\right\}.
$$
\begin{thm}\label{tfi}
There exists a constant C depending on T and $\rrho_0$ (not on $\tau$) such that the piecewise constant interpolation $\rrhot$ satisfies
\begin{equation}\label{bound}
||\rhout^{m_1/2}||_{L^2(0,T;H^1(\Rd))}+||\rhodt^{m_2/2}||_{L^2(0,T;H^1(\Rd))}\le C \qquad \text{for all}\ T>0.
\end{equation}
\end{thm}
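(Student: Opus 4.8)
The plan is to use the flow interchange technique of Matthes--McCann--Savar\'e with the entropy functional $\mE$ of \eqref{af} as auxiliary functional, exploiting that $\mE$ possesses a $0$-flow (the heat semigroup $\bm{S}_{\mE}$) and that the JKO minimizer $\rrhotnn$ beats the competitor $\bm{S}_{\mE}^s\rrhotn$ in \eqref{jko}. First I would fix $n$, insert the competitor $\rrho=\bm{S}_{\mE}^s\rrhotn$ (applied to $\rrhotnn$, i.e. run the heat flow for time $s$ starting from $\rrhotnn$) into the minimality inequality, and rearrange to obtain
\[
\mF[\rrhotnn\,|\,\rrhotn]-\mF[\bm{S}_{\mE}^s\rrhotnn\,|\,\rrhotn]\ \le\ \frac{1}{2\tau}\Big(\mW_2^2(\rrhotn,\bm{S}_{\mE}^s\rrhotnn)-\mW_2^2(\rrhotn,\rrhotnn)\Big).
\]
Dividing by $s$ and letting $s\downarrow 0$, the right-hand side is controlled by the metric derivative of the heat flow at time $0$, which by the EVI characterization of the $0$-flow (and standard arguments, cf.\ \cite{MMCS,AGS}) is bounded by $\frac{1}{\tau}\big(\mE[\rrhotn]-\mE[\rrhotnn]\big)$; the left-hand side gives the dissipation $\bm{D}_{\mE}(\rrhotnn\,|\,\rrhotn)$.

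The heart of the matter is to compute $\bm{D}_{\mE}(\rrhotnn\,|\,\rrhotn)$ from below. Along the heat flow $\eta_i(s,\cdot)=\si^s\rho_{i,\tau}^{n+1}$, a formal differentiation of each piece of $\mF$ at $s=0$ gives, for the diffusion part, $\int_{\Rd}\nabla A_{\rho_i}(\rho_{1,\tau}^{n+1},\rho_{2,\tau}^{n+1})\cdot\nabla\rho_{i,\tau}^{n+1}\,dx$ summed over $i$, which by (D3) (uniform convexity of $B$) is bounded below by $C_1\sum_i\int_{\Rd}(\rho_{i,\tau}^{n+1})^{m_i-2}|\nabla\rho_{i,\tau}^{n+1}|^2\,dx$, i.e.\ a constant times $\sum_i\|\nabla(\rho_{i,\tau}^{n+1})^{m_i/2}\|_{L^2}^2$; the interaction parts contribute terms of the form $-\int_{\Rd}\Delta(H_i*\rho_{i,\tau}^{n+1})\,\rho_{i,\tau}^{n+1}$ and $-\int_{\Rd}\Delta(K_i*\rho_{j,\tau}^{n})\,\rho_{i,\tau}^{n+1}$, which by (HK2) (one-sided bound $\Delta H\le\bar C$) and (K1) are bounded below by $-C$ (using the uniform mass and the uniform $L^{m_i}$ bounds from Remark \ref{lmbound}). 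Making this rigorous is the main obstacle: one must justify differentiating $\mF$ along the heat semigroup when $\rho_{i,\tau}^{n+1}$ is merely an $L^{m_i}$ function with $\nabla(\rho_{i,\tau}^{n+1})^{m_i/2}\in L^2$ not yet known to be finite. The standard remedy, following \cite{MMCS}, is to work with the \emph{time-integrated} version: integrate the differential inequality in $s$ over $[0,\sigma]$, use that $\mE$ is displacement convex so $s\mapsto\mE[\bm{S}_{\mE}^s\rrhotnn]$ is non-increasing and the entropy production formula $\frac{d}{ds}\mE[\bm{S}_{\mE}^s\rho]=-\sum_i 4\|\nabla(\eta_i(s))^{1/2}\|_{L^2}^2$ holds, and pass to the liminf as $\sigma\downarrow 0$; lower semicontinuity of the Dirichlet-type functionals then transfers the bound to $\rho_{i,\tau}^{n+1}$ itself. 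The growth assumption (D2) enters here to ensure all the terms obtained by differentiating $A(\eta_1^{m_1/2\cdot 2/m_1},\dots)$—more precisely $B$ evaluated along the flow—are integrable and controlled by the produced Dirichlet energy plus lower-order $L^{\alpha_i}$ terms via Gagliardo--Nirenberg (note $\alpha_i<M_i=m_i(d+2)/d$ is exactly the exponent for which $\|\rho_i\|_{L^{\alpha_i}}$ is interpolated between $\|\rho_i\|_{L^1}$ and $\|\nabla\rho_i^{m_i/2}\|_{L^2}$).

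Having obtained, for each $n$ and each $\sigma\in(0,\tau]$ (or in the limit $\sigma\to 0$),
\[
C_1\,\tau\sum_{i=1}^{2}\big\|\nabla(\rho_{i,\tau}^{n+1})^{m_i/2}\big\|_{L^2(\Rd)}^2\ \le\ \mE[\rrhotn]-\mE[\rrhotnn]+\big(\mF[\rrhotn|\rrhotn]-\mF[\rrhotnn|\rrhotn]\big)\cdot 0 + C\tau\,,
\]
more precisely a bound of the form $C_1\tau\sum_i\|\nabla(\rho_{i,\tau}^{n+1})^{m_i/2}\|_{L^2}^2\le \mE[\rrhotn]-\mE[\rrhotnn]+C\tau$, I would sum over $n$ from $0$ to $N-1$. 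The entropy terms telescope, leaving $\mE[\rrho_0]-\mE[\rrhot^N]$; by Remark \ref{controlbelowentropy} and the uniform second-moment bound from \eqref{firstdistance}, $-\mE[\rrhot^N]\le C(m_2(\rrhot^N)+1)^\beta\le C(T,\rrho_0)$, while $\mE[\rrho_0]<+\infty$ follows from $\mF[\rrho_0|\rrho_0]<+\infty$ together with the entropy--$L^{m_i}$ inequality (since $A(\rho_{1,0},\rho_{2,0})\in L^1$ forces $\rho_{i,0}\in L^{m_i}$ by \eqref{exD2}, and an $L^{m_i}$ function with finite second moment has finite entropy). Therefore
\[
\sum_{i=1}^{2}\sum_{n=0}^{N-1}\tau\,\big\|\nabla(\rho_{i,\tau}^{n+1})^{m_i/2}\big\|_{L^2(\Rd)}^2\ \le\ C(T,\rrho_0),
\]
which is precisely $\sum_i\|\nabla\rho_{i,\tau}^{m_i/2}\|_{L^2(0,T;L^2(\Rd))}^2\le C$ for the piecewise-constant interpolant. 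Combined with the uniform $L^\infty(0,T;L^{m_i})$ bound of Remark \ref{lmbound} (which gives $\|\rho_{i,\tau}^{m_i/2}\|_{L^\infty(0,T;L^2)}\le C$, hence control of the $L^2$-norm part of the $H^1$-norm), this yields \eqref{bound}.
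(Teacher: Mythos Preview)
Your proposal is correct and follows the same flow-interchange strategy as the paper: use the minimality of $\rrhotnn$ against the heat-flow competitor $\bm{S}_{\mE}^s\rrhotnn$, invoke the EVI for the $0$-flow of $\mE$ to bound the Wasserstein difference by $\mE[\rrhotn]-\mE[\rrhotnn]$, estimate the dissipation $\bm{D}_{\mE}\mF$ from below via (D3) and (HK2), sum in $n$, and close with the moment/entropy bounds.

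One point of comparison: you anticipate a delicate justification step and bring in (D2) and Gagliardo--Nirenberg to control the terms arising from differentiating $B$ along the flow. The paper avoids this entirely by a cleaner device: it computes $\frac{d}{dt}\mF[\bm{S}_{\mE}^t\rrhotnn\,|\,\rrhotn]$ at \emph{strictly positive} times $t>0$, where the heat semigroup has already smoothed $\rrhotnn$ to $C^\infty$ strictly positive densities, so the integration by parts and the application of (D3) are immediate with no growth hypothesis needed. One then writes $\bm{D}_{\mE}\mF$ as $\limsup_{s\downarrow 0}\int_0^1(-\frac{d}{dz}|_{z=st}\mF)\,dt$, applies Fatou, and passes the gradient bound back to $\rrhotnn$ by the $L^2$ convergence $(\bm{S}_{\mE_i}^t\rhoitnn)^{m_i/2}\to(\rhoitnn)^{m_i/2}$ together with weak lower semicontinuity of the Dirichlet energy. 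In particular, neither (D2) nor the entropy-production identity for $\mE$ plays any role here; (D2) is only used later, in the Euler--Lagrange step and in the improved $L^{\alpha_i}$ convergence.
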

\begin{proof}
From the definition of the sequence $\{\rrhotn\}_{n\in\mathbb{N}}$, for all $s>0$ we have that
$$
\frac{1}{2\tau}\mW_2^2(\rrhotnn,\rrhotn)+\mF[\rrhotnn|\rrhotn]\le\frac{1}{2\tau}\mW_2^2(\bm{S}_{\mE}^s\rrhotnn,\rrhotn)+\mF[\bm{S}_{\mE}^s\rrhotnn|\rrhotn],
$$
which gives, dividing by $s>0$ and passing to the $\limsup$ as $s\downarrow0$,
\begin{equation}\label{controlf}
\tau\bm{D}_{\mE}\mF(\rrhotnn|\rrhotn)\le\frac{1}{2}\frac{d^+}{dt}\bigg(\mW_2^2(\bm{S}_{\mE}^t\rrhotnn,\rrhotn)\bigg)\Big|_{t=0}\overset{\bm{(E.V.I.)}}{\le}\mE[\rrhotn]-\mE[\rrhotnn].
\end{equation}
In the last inequality we have used the well-known equivalence between displacement convexity and the existence of the E.V.I., see e.g. \cite{DS}. Now, let us focus on the left hand side of \eqref{controlf}. First of all, note that:
\begin{equation}\label{integralformofdis}
\begin{split}
\bm{D}_{\mE}\mF(\rrhotnn|\rrhotn)&=\limsup_{s\downarrow0}\left\{\frac{\mF[\rrhotnn|\rrhotn]-\mF[\bm{S}_{\mE}^s\rrhotnn|\rrhotn]}{s}\right\}\\&=\limsup_{s\downarrow0}\int_0^1\left(-\frac{d}{dz}\Big|_{z=st}\mF[\bm{S}_{\mE}^{z}\rrhotnn|\rrhotn]\right)\,dt.
\end{split}
\end{equation}
So, let us compute the time derivative inside the above integral, using integration by parts and keeping in mind the $C^\infty$ regularity of the solution to the heat equation:
\begin{equation}\label{deriv}
\begin{split}
\frac{d}{dt}\mF[\bm{S}_{\mE}^t\rrhotnn|\rrhotn]=&-\int_{\R^d}A_{\rhou\rhou}(\su^t\rhoutnn,\sd^t\rhodtnn)|\nabla(\su^t\rhoutnn)|^2\,dx\\&-\int_{\R^d}A_{\rhou\rhod}(\su^t\rhoutnn,\sd^t\rhodtnn)\nabla(\su^t\rhoutnn)\nabla(\sd^t\rhodtnn)\,dx\\&-\int_{\R^d}A_{\rhod\rhou}(\su^t\rhoutnn,\sd^t\rhodtnn)\nabla(\sd^t\rhodtnn)\nabla(\su^t\rhoutnn)\,dx\\&-\int_{\R^d}A_{\rhod\rhod}(\su^t\rhoutnn,\sd^t\rhodtnn)|\nabla(\sd^t\rhodtnn)|^2\,dx\\&-\sum_{i=1}^2\int_{\Rd}\int_{\Rd}\nabla H_i(x-y)\si^t\rhoitnn(y)\nabla\si^t\rhoitnn(x)\,dy\,dx\\&-\sum_{i\neq j}\int_{\Rd}\int_{\Rd}\nabla K_i(x-y)\rhojtn(y)\nabla\si^t\rhoitnn(x)\,dy\,dx.
\end{split}
\end{equation}
By using the crucial coerciveness assumption (D3) for the terms involving the diffusion function $A$ and the distributional control of $\Delta H_i$ and $\Delta K_i$ in assumption (HK2), we obtain
\begin{equation}\label{controld}
\frac{d}{dt}\mF[\bm{S}_{\mE}^t\rrhotnn]\le-C_1\int_{\R^d}|\nabla(\su^t\rhoutnn)^{\frac{m_1}{2}}|^2+|\nabla(\sd^t\rhodtnn)^{\frac{m_2}{2}}|^2\,dx \ + \ \bar{C}.
\end{equation}
Note that the reconstruction of the gradient terms in \eqref{controld} involves powers of order $m_1-2$ and $m_2-2$, and these two exponents may be negative. This is not a problem because $\bm{S}_{\mE}^t\rrhotnn$ solves a decoupled system of heat equations, therefore both components of $\bm{S}_{\mE}^t\rrhotnn$ are strictly positive everywhere. The above inequality, together with \eqref{integralformofdis}, implies:
\begin{equation}\label{controld+}
\begin{split}
\bm{D}_{\mE}\mF(\rrhotnn|\rrhotn)\ge C_1\liminf_{s\downarrow0}\int_0^1\int_{\R^d}|\nabla(\su^{st}\rhoutnn)^{\frac{m_1}{2}}|^2+|\nabla(\sd^{st}\rhodtnn)^{\frac{m_2}{2}}|^2\,dx\,dt\ - \bar{C}.
\end{split}
\end{equation}
\normalsize As a consequence of \eqref{controlf} and \eqref{controld+} we obtain that
\begin{equation}
\tau\,C_1\,\liminf_{s\downarrow0}\int_0^1\int_{\R^d}|\nabla(\su^{st}\rhoutnn)^{\frac{m_1}{2}}|^2+|\nabla(\sd^{st}\rhodtnn)^{\frac{m_2}{2}}|^2\,dx\,dt\le\mE[\rrhotn]-\mE[\rrhotnn] +\bar{C}\tau.
\end{equation}\noindent
Let us recall that $\rrhotn\in L^{m_1}(\Rd)\times L^{m_2}(\Rd)$ for every $n\in\mathbb{N}$ (Remark \ref{lmbound}), so $(\rhoitn)^{\frac{m_i}{2}}\in L^2(\Rd)$ for all $n\in\mathbb{N}$ and for $i=1,2$. Then it is well-known from the heat equation's theory that $(\bm{S}_{\mE_i}^t\rhoitnn)^{\frac{m_i}{2}}$ converges to $(\rhoitnn)^{\frac{m_i}{2}}$ in $L^2(\R^d)$ as $t\downarrow0$ for $i=1,2$; thus, by weak lower semicontinuity we have:
\begin{equation}
\tau\,C_1\,\int_{\R^d}\left[|\nabla(\rhoutnn)^{\frac{m_1}{2}}|^2+|\nabla(\rhodtnn)^{\frac{m_2}{2}}|^2\right]\,dx\le\mE[\rrhotn]-\mE[\rrhotnn]\ + \bar{C}\tau.
\end{equation}
Since $x\log x<x^m$ for $m>1$ and for every $x\in\R_+$, using Remark \ref{lmbound} we immediately get 
$$
\mE[\rrhotn]<||\rhoutn||_{L^{m_1}(\Rd)}^{m_1}+||\rhodtn||_{L^{m_2}(\Rd)}^{m_2}\le\tilde{\mF}[\rrho_0] + C(T).
$$
Moreover, a combination of Remarks \ref{momp} and \ref{controlbelowentropy} and the estimate \eqref{firstdistance} immediately gives a uniform boundedness from below for $\mE[\rrhotn]$ (taking into account that the initial condition has finite second moment). If we sum over $n$ from 0 to $N-1$, we obtain
$$
\int_0^T\int_{\R^d}\left[|\nabla(\rhout)^{\frac{m_1}{2}}|^2+|\nabla(\rhodt)^{\frac{m_2}{2}}|^2\right]\,dx\,dt\le\mE[\bm{\rrho}_0]-\mE[\bm{\rrho}_{\tau}^N]+\bar{C}T\le C(T,\rrho_0),
$$
which allows us to conclude the proof, since we can use the estimate in Remark \ref{lmbound} to get
\begin{equation}\label{h1bound}
\int_0^T\left[||\rhout(t,\cdot)^{\frac{m_1}{2}}||_{H^1(\Rd)}^2+||\rhodt(t,\cdot)^{\frac{m_2}{2}}||_{H^1(\Rd)}^2 \right]\,dt\le C(T,\rrho_0).
\end{equation}
\end{proof}
\noindent
We now collect the results in Proposition \ref{propaa} and Theorem \ref{tfi} to prove strong $L^p$ compactness of $\rrhot$.
\begin{cor}\label{cfi}
The sequence $\rrho_{\tau_{k}}:[0,+\infty[\longrightarrow\mpd^2$ obtained in Proposition \ref{propaa} converges to $\tilde{\rrho}$ strongly in 
$$L^{m_1}(]0,T[\times\R^d)\times L^{m_2}(]0,T[\times\R^d),$$ 
for every $T>0$.
\end{cor}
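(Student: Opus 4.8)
The plan is to apply the refined Aubin–Lions lemma of Rossi–Savaré (Theorem \ref{aulirs}) to the sequence $\rrho_{\tau_k}$, using the $H^1$-type bound from Theorem \ref{tfi} to produce a compactifying functional, and the $\frac{1}{2}$-Hölder equicontinuity in $\mW_2$ from Proposition \ref{propaa} to control the time translates. First I would work in the Banach space $X = L^1(\R^d)\times L^1(\R^d)$ (or, to keep the second moments under control, a weighted $L^1$ space), and on it define the functional
\[
\mathcal{G}[\rrho] := \||\nabla \rho_1^{m_1/2}\|_{L^2(\R^d)}^2 + \||\nabla \rho_2^{m_2/2}\|_{L^2(\R^d)}^2 + \|\rho_1\|_{L^{m_1}(\R^d)}^{m_1} + \|\rho_2\|_{L^{m_2}(\R^d)}^{m_2} + m_2(\rho_1) + m_2(\rho_2),
\]
set to $+\infty$ when any component is not an $L^1$ density of the stated integrability. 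This $\mathcal{G}$ is lower semicontinuous on $X$, and its sublevels are relatively compact in $X$: on a sublevel, $\rho_i^{m_i/2}$ is bounded in $H^1(\R^d)$, hence (Rellich) compact in $L^2_{loc}$, so $\rho_i$ is compact in $L^1_{loc}$; the uniform second-moment bound upgrades $L^1_{loc}$ compactness to $L^1(\R^d)$ compactness by a standard tightness argument. As the pseudo-distance I would take $g = \mW_2$ restricted to $X\times X$ (extended by $+\infty$ off $\mpd^2$); $\mW_2$ is lower semicontinuous and $g(\rrho,\eeta)=0$ forces $\rrho=\eeta$, so it is an admissible pseudo-distance.

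Next I would verify the two hypotheses \eqref{hprossav} of Theorem \ref{aulirs} with $U = \{\rrho_{\tau_k}\}_k$. The first, $\sup_k \int_0^T \mathcal{G}[\rrho_{\tau_k}(t)]\,dt < \infty$, follows by combining: the $L^2(0,T;H^1)$ bound on $\rho_{i,\tau}^{m_i/2}$ from \eqref{h1bound} in Theorem \ref{tfi}; the uniform-in-time $L^{m_i}$ bound from Remark \ref{lmbound}; and the uniform-in-time second-moment bound established in Proposition \ref{propaa} (consequence of \eqref{firstdistance}). The second hypothesis, $\lim_{h\downarrow 0}\sup_k \int_0^{T-h} \mW_2(\rrho_{\tau_k}(t+h),\rrho_{\tau_k}(t))\,dt = 0$, follows directly from the Hölder estimate \eqref{holdercont}: for $\tau_k$ small one has $\mW_2(\rrho_{\tau_k}(t+h),\rrho_{\tau_k}(t)) \le c\sqrt{1+T}\,(\sqrt{h}+\sqrt{\tau_k})$, so the integral is bounded by $c\sqrt{1+T}\,T(\sqrt{h}+\sqrt{\tau_k})$, which vanishes as $h\downarrow 0$ (after first letting $k\to\infty$, or simply noting $\tau_k\to 0$). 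Theorem \ref{aulirs} then yields a further subsequence, not relabelled, converging in measure on $(0,T)$ to some $\hat{\rrho}:(0,T)\to X$; since $\rrho_{\tau_k}\to\tilde{\rrho}$ narrowly uniformly in $t$ by Proposition \ref{propaa}, necessarily $\hat{\rrho}(t) = \tilde{\rrho}(t)$ for a.e. $t$.

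Finally I would promote convergence in measure to strong convergence in $L^{m_i}((0,T)\times\R^d)$. Convergence in measure of $\rho_{i,\tau_k}(t)$ in $L^1(\R^d)$, together with the uniform bound $\sup_{t}\|\rho_{i,\tau_k}(t)\|_{L^{m_i}(\R^d)}^{m_i} \le C$ from Remark \ref{lmbound}, gives, by a Vitali-type argument, that $\rho_{i,\tau_k}\to\tilde\rho_i$ in $L^{q}((0,T)\times\R^d)$ for every $q<m_i$; to reach the exponent $m_i$ itself one interpolates with the slightly higher integrability available from \eqref{h1bound} via the Gagliardo–Nirenberg–Sobolev inequality (the bound on $\nabla\rho_i^{m_i/2}$ in $L^2$ plus $\rho_i^{m_i/2}\in L^\infty_t L^2_x$ yields $\rho_i^{m_i/2}\in L^2_tL^{2^*}_x$, hence $\rho_i \in L^{m_i(1+2/d)}_{t,x} = L^{M_i}_{t,x}$ is bounded), so the family $\{|\rho_{i,\tau_k}|^{m_i}\}$ is uniformly integrable on $(0,T)\times\R^d$ and Vitali's theorem gives strong $L^{m_i}$ convergence.

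The main obstacle is the last step: passing from the abstract "convergence in measure in $X$" output of Theorem \ref{aulirs} to strong $L^{m_i}$ convergence, which requires carefully combining the equi-integrability coming from the higher $L^{M_i}$ integrability (itself a consequence of the Gagliardo–Nirenberg interpolation applied to \eqref{h1bound} and Remark \ref{lmbound}) with the no-mass-escape-to-infinity coming from the uniform second-moment bound. Setting up $\mathcal{G}$ so that its sublevels are genuinely compact in $X$ — not merely in $L^1_{loc}$ — is the technical heart, and it is exactly there that the second-moment estimate from Proposition \ref{propaa} earns its keep.
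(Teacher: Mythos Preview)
Your strategy is the paper's: apply the Rossi--Savar\'e lemma with a coercive functional built from the $H^1$ bound on $\rho_i^{m_i/2}$ together with the second moment, take $\mW_2$ as the pseudo-distance, and then upgrade the convergence. Two comments. First, the paper chooses $X=L^{m_1}(\R^d)\times L^{m_2}(\R^d)$ rather than $L^1\times L^1$, proving relative compactness of sublevels by Riesz--Fr\'echet--Kolmogorov for $\eta_i=\rho_i^{m_i/2}$ in $L^2(\R^d)$ and then using the continuity of $\eta_i\mapsto\eta_i^{2/m_i}$ from $L^2$ to $L^{m_i}$; with this choice the upgrade step is immediate (convergence in measure in $t$ with values in $L^{m_i}$, plus the uniform bound of Remark~\ref{lmbound}, gives strong $L^{m_i}_{t,x}$ convergence by dominated convergence in $t$), so your Vitali argument invoking the $L^{M_i}$ bound is unnecessary here---that higher integrability is exactly what the \emph{next} corollary establishes. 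Second, your verification of the time-translate hypothesis has a slip: the pointwise use of \eqref{holdercont} yields $\int_0^{T-h}\mW_2(\rrho_{\tau_k}(t+h),\rrho_{\tau_k}(t))\,dt\le CT(\sqrt{h}+\sqrt{\tau_k})$, and one cannot ``first let $k\to\infty$'' inside $\sup_k$; taking $\sup_k$ leaves a residual $CT\sup_k\sqrt{\tau_k}>0$. The paper defers this point to \cite{DFM}: the correct bound exploits the piecewise-constant structure together with \eqref{telescopic1} (via Cauchy--Schwarz over the steps) to obtain $\int_0^{T-h}\mW_2\,dt\le C\,h$ uniformly in $\tau$, which does satisfy \eqref{hprossav}.
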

\begin{proof}
We exploit Theorem \ref{aulirs} with $X=L^{m_1}(\Rd)\times L^{m_2}(\R^d)$, $g=\mW_2$ and the functional $\mathcal{N}$ defined by
\begin{equation*}
\mathcal{N}(\rrho)=
\begin{cases}
||\rhou^{\frac{m_1}{2}}||_{H^1(\Rd)}+||\rhod^{\frac{m_2}{2}}||_{H^1(\Rd)} + \int_{\R^d}|x|^2 (\rhou(x)+\rhod(x))\, dx, & \text{if both $\nabla\rhou^{\frac{m_1}{2}}\in L^2(\Rd)$}\\ 
 & \text{and $\nabla\rhod^{\frac{m_2}{2}}\in L^2(\Rd)$} ;\\
+\infty & \text{otherwise}.
\end{cases}
\end{equation*}
The lower semi-continuity of $\mathcal{N}$ on $X$ can be proven by adapting the proof of \cite[Lemma A.1]{DFM}. For any fixed $c>0$, let us set $A_c:=\{\rrho\in L^{m_1}(\Rd)\times L^{m_2}(\Rd) \,:\,\,\mathcal{N}(\rrho)\le c\}$ a sub-level of $\mathcal{N}$ and prove that it is relatively compact in $L^{m_1}(\Rd)\times L^{m_2}(\Rd)$. Setting $B_c:=\{\bm{\eta}=(\rhou^{\frac{m_1}{2}},\rhod^{\frac{m_2}{2}})\,:\,\,\rrho\in A_c\}$, we prove that $B_c$ is relatively compact in $L^2(\Rd)^2$, since the map $\iota: L^2(\Rd)^2\to L^{m_1}(\Rd)\times L^{m_2}(\Rd)$ with $\iota(\eta)=(\eta_1^{\frac{2}{m_1}},\eta_2^{\frac{2}{m_2}})$ is continuous and $A_c=\iota(B_c)$.
Every sub-level $B_c$ can be easily proven to be strongly relatively compact in $L^2(\Rd)$ in view of Riesz-Frechet-Kolmogorov Theorem, thanks to the uniform continuity estimate
\begin{align*}
& \int_{\R^d}|\eta_i(x+h)-\eta_i(x)|^2\,dx = \int_{\R^d}\left|\int_0^1 \frac{d}{d\tau}\eta_i(x+\tau h)\,d\tau\right|^2 \,dx = \int_{\R^d}\left|\int_0^1 h \cdot \nabla \eta_i(x+\tau h)\,d\tau\right|^2 \,dx\\
& \ \leq |h|^2\int_{\R^d}\int_0^1 |\nabla\eta_i(x+\tau h)|^2\,d\tau \,dx = |h|^2\int_{\R^d}\|\nabla \eta_i\|_{L^2(\R^d)}^2
\end{align*}
and the uniform integrability at infinity (using H\"older inequality)
\begin{align*}
& \int_{|x|\geq R}\eta_i(x)^2 \,dx\leq \frac{1}{R^{2\delta}}\int_{\R^d}|x|^{2\delta}\rho_i(x)^{m_i}\,dx\leq \frac{1}{R^{2\delta}}\left(\int_{\R^d}|x|^2\rho_i(x)\,dx\right)^\delta \left(\int_{\R^d}\rho_i(x)^{\frac{m_i-\delta}{1-\delta}}\,dx\right)^{1-\delta}
\end{align*}
in which $\delta$ can be chosen in $(0,1)$ such that $(m_i-\delta)/(1-\delta)= p_i$, with
$p_i\in\left(\max\left\{2,m_i\right\},+\infty\right)$ for $d=1,2$, such that  $\max\left\{2,m_i\right\}<p_i<\frac{2d}{d-2}$ for $d>2$, with those requirements implied by the Gagliardo-Nirenberg inequality
$$
\|\eta_i\|_{L^{p_i}}\le C\|\nabla \eta_i\|_{L^2}^{\theta_i}\|\eta_i\|_{L^2}^{1-\theta_i},\qquad \theta_i=\frac{(p_i-2)d}{2 p_i},
$$
which guarantees that $\|\eta_i\|_{L^{p_i}}$ is finite.

\noindent
Moreover, denoting $U:=\{\rrho_{\tau_{k}}\,:\,\,k\in\mathbb{N}\}$, the hypothesis \eqref{hprossav} of Theorem \ref{aulirs} is satisfied thanks to the previous Theorem \ref{tfi} and to the $\tau$-uniform approximate H\"{o}lder continuity \eqref{holdercont} (see \cite{DFM}). Hence, we have a subsequence $(\tau_{k'})\subseteq(\tau_{k})$ such that $\rrho_{\tau_{k'}}$ converges in measure (as a function of time $t\in[0,T]$ with values in $X$) to some limit $\rrho'$, which has to be $\tilde{\rrho}$ due to the narrow convergence of $\rrho_{\tau_{k}}$ to $\tilde{\rrho}$ uniformly in time. Actually we can state that the whole sequence $\{\rrho_{\tau_{k}}\}$ converges in measure to $\tilde{\rrho}$ and, as a consequence, we have almost everywhere convergence (up to a subsequence). Using Remark \ref{lmbound} and a suitable $L^p$ interpolation with respect to $t$, we obtain the strong convergence of $\rrho_{\tau_{k}}$ to $\tilde{\rrho}$ in $L^{m_1}(]0,T[\times\R^d)\times L^{m_2}(]0,T[\times\R^d)$.
\end{proof}

With the help of some standard interpolation inequalities we can actually improve the integrability exponent for the strong convergence of $\rrho_{\tau_{k}}$.

\begin{cor}\label{cfi1}
The sequence $\rrho_{\tau_{k}}:[0,+\infty[\longrightarrow\mpd^2$ in Corollary \ref{cfi} converges to $\tilde{\rrho}$ in 
$$L^{\alpha_1}(]0,T[\times\R^d)\times L^{\alpha_2}(]0,T[\times\R^d),$$
for every $T>0$, provided $\alpha_1$ and $\alpha_2$ satisfy
\begin{equation}\label{improved_Lp}
\alpha_i<M_i:=\frac{m_i\, (d+2)}{d} 
\end{equation}
with $i=1,2$. 
\end{cor}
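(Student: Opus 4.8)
The plan is to bootstrap the two uniform a priori bounds already at hand — the pointwise-in-time $L^{m_i}$ bound of Remark~\ref{lmbound} and the gradient bound $\rho_{i,\tau}^{m_i/2}\in L^2(0,T;H^1(\R^d))$ of Theorem~\ref{tfi} — into a uniform bound for $\rho_{i,\tau_k}$ in $L^{M_i}(]0,T[\times\R^d)$, and then to deduce the improved strong convergence by interpolating against the $L^{m_i}$-strong convergence already established in Corollary~\ref{cfi}.

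First I would recast the two bounds in terms of $\eta_{i,\tau_k}:=\rho_{i,\tau_k}^{m_i/2}$: Remark~\ref{lmbound} says that $\eta_{i,\tau_k}$ is bounded in $L^\infty(0,T;L^2(\R^d))$ and Theorem~\ref{tfi} says that it is bounded in $L^2(0,T;H^1(\R^d))$, both uniformly in $k$. Then I would invoke the Gagliardo--Nirenberg inequality (the very one already used in the proof of Corollary~\ref{cfi}) with the exponent $p=\tfrac{2(d+2)}{d}$, for which the interpolation parameter is $\theta=\tfrac{d}{d+2}$; this $p$ is admissible for $H^1(\R^d)$ in every dimension, since $p<\tfrac{2d}{d-2}$ when $d\ge 3$ and $p<\infty$ when $d=1,2$. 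Raising the inequality to the power $p$, and observing that $p\theta=2$ and $p(1-\theta)=\tfrac4d$, integration in $t$ gives
\[
\begin{split}
\int_0^T\!\!\int_{\R^d}|\eta_{i,\tau_k}|^{p}\,dx\,dt\ &\le\ C\int_0^T\|\nabla\eta_{i,\tau_k}(t,\cdot)\|_{L^2(\R^d)}^{2}\,\|\eta_{i,\tau_k}(t,\cdot)\|_{L^2(\R^d)}^{4/d}\,dt\\
&\le\ C\,\|\eta_{i,\tau_k}\|_{L^\infty(0,T;L^2(\R^d))}^{4/d}\,\|\eta_{i,\tau_k}\|_{L^2(0,T;H^1(\R^d))}^{2},
\end{split}
\]
whose right-hand side is uniformly bounded in $k$. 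Since $\rho_{i,\tau_k}^{M_i}=\eta_{i,\tau_k}^{2M_i/m_i}=\eta_{i,\tau_k}^{p}$, this is exactly a uniform bound for $\rho_{i,\tau_k}$ in $L^{M_i}(]0,T[\times\R^d)$.

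With that bound in hand, the conclusion is a soft interpolation argument. Corollary~\ref{cfi} provides $\rho_{i,\tau_k}\to\tilde{\rho}_i$ strongly in $L^{m_i}(]0,T[\times\R^d)$ and, along a subsequence, a.e.; Fatou's lemma then puts $\tilde{\rho}_i$ in $L^{M_i}(]0,T[\times\R^d)$, so that $\rho_{i,\tau_k}-\tilde{\rho}_i$ is bounded in $L^{M_i}(]0,T[\times\R^d)$ uniformly in $k$. For $m_i\le\alpha_i<M_i$ I would write $\tfrac{1}{\alpha_i}=\tfrac{1-\vartheta_i}{m_i}+\tfrac{\vartheta_i}{M_i}$ with $\vartheta_i\in[0,1)$ and use, over $Q_T:=\,]0,T[\times\R^d$,
\[
\|\rho_{i,\tau_k}-\tilde{\rho}_i\|_{L^{\alpha_i}(Q_T)}\ \le\ \|\rho_{i,\tau_k}-\tilde{\rho}_i\|_{L^{m_i}(Q_T)}^{1-\vartheta_i}\,\|\rho_{i,\tau_k}-\tilde{\rho}_i\|_{L^{M_i}(Q_T)}^{\vartheta_i}\ \longrightarrow\ 0,
\]
the first factor going to zero by Corollary~\ref{cfi} and the second staying bounded. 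If instead $1<\alpha_i<m_i$ one interpolates between $L^1$ and $L^{m_i}$, exploiting that the $\rho_{i,\tau_k}(t,\cdot)$ (and $\tilde{\rho}_i(t,\cdot)$) are probability densities, so that $\rho_{i,\tau_k}-\tilde{\rho}_i$ is bounded in $L^1(Q_T)$; the same conclusion follows, covering all $\alpha_i$ in \eqref{improved_Lp}. I expect the only genuine obstacle to be the parabolic interpolation of the first step — checking that $p=\tfrac{2(d+2)}{d}$ is an admissible Sobolev exponent in every dimension $d\ge 1$ and that the time exponents close exactly with the $L^\infty_tL^2_x$ and $L^2_tH^1_x$ norms available ($p\theta=2$, $p(1-\theta)=\tfrac4d$); everything afterwards is routine use of Hölder and interpolation inequalities together with Corollary~\ref{cfi}.
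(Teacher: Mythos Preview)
Your argument is correct, and it proceeds along a somewhat different route than the paper's own proof.

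The paper applies the Gagliardo--Nirenberg inequality directly to the \emph{difference} $\rho_{i,\tau}^{m_i/2}-\tilde\rho_i^{m_i/2}$ with exponent $p_i=2\alpha_i/m_i$, and then splits the time integral by H\"older into a gradient factor (controlled by Theorem~\ref{tfi}) and a factor $\int_0^T\|\rho_{i,\tau}^{m_i/2}-\tilde\rho_i^{m_i/2}\|_{L^2}^{\gamma_i}\,dt$; the restriction $\alpha_i<M_i$ emerges as the condition $\gamma_i>0$ needed for that second factor to be meaningful and to go to zero via \eqref{convsigma}--\eqref{32epsilon}. Your approach instead separates the two ingredients cleanly: first you use Gagliardo--Nirenberg at the \emph{endpoint} exponent $p=2(d+2)/d$ on the sequence itself, exploiting the exact matching $p\theta=2$, $p(1-\theta)=4/d$ with the available $L^\infty_tL^2_x$ and $L^2_tH^1_x$ bounds to obtain a uniform $L^{M_i}(Q_T)$ bound for $\rho_{i,\tau_k}$; then you conclude by standard space--time $L^p$ interpolation between the $L^{m_i}$ strong convergence of Corollary~\ref{cfi} and this $L^{M_i}$ boundedness. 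Your version is more modular and makes it transparent why $M_i$ is the natural threshold (it is the parabolic Sobolev exponent associated with $L^\infty_tL^{m_i}_x\cap L^2_tW^{1,m_i}_x$ after the change of variable $\eta=\rho^{m_i/2}$), whereas the paper's version avoids appealing to Fatou's lemma for the limit and works with a single $\alpha_i$-dependent interpolation. Both arguments are equally valid.
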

\begin{proof}
For simplicity in the notation, we shall denote the subsequence $\rrho_{\tau_k}$ by $\rrho_\tau$. As a consequence of the previous Corollary and Remark \ref{lmbound} we get
\begin{equation*}
\sup_{t\in[0,T]}\left[||\rhout(t,\cdot)-\tilde{\rho_1}(t,\cdot)||_{L^{m_1}(\Rd)}+||\rhodt(t,\cdot)-\tilde{\rho_2}(t,\cdot)||_{L^{m_2}(\Rd)}\right]\le C(T,\rrho_0),
\end{equation*}
which implies
\begin{equation}\label{convsigma}
\int_0^T\left[||\rhout(t,\cdot)-\tilde{\rho_1}(t,\cdot)||_{L^{m_1}(\Rd)}^{\sigma_1}+||\rhodt(t,\cdot)-\tilde{\rho_2}(t,\cdot)||_{L^{m_2}(\Rd)}^{\sigma_2}\right]\,dt\underset{\tau\to0}{\longrightarrow} 0,
\end{equation}
for $\sigma_1\ge m_1$ and $\sigma_2\ge m_2$. In case one of the two exponents $\sigma_i$ is smaller than $m_i$ then we can proceed as follows for an arbitrary $\epsilon>0$:
\begin{equation}\label{32epsilon}
\int_0^T ||\rhoit(t,\cdot)-\tilde{\rho_i}(t,\cdot)||_{L^{m_i}(\Rd)}^{\sigma_i}\, dt \le \left(\int_0^T ||\rhoit(t,\cdot)-\tilde{\rho_i}(t,\cdot)||_{L^{m_i}(\Rd)}^{m_i+\epsilon}\, dt\right)^{\frac{\sigma_i}{m_i+\epsilon}}T^{1-\frac{\sigma_i}{m_i+\epsilon}}
\end{equation}
and we get the same conclusion as in \eqref{convsigma} for all $\sigma_1,\sigma_2>0$. In order to obtain a refined convergence, we use the Gagliardo-Nirenberg inequality
$$
||f||_{L^p}\le C||\nabla f||_{L^r}^{\theta}||f||_{L^q}^{1-\theta},
$$ 
\noindent
where $1\le q,r\le +\infty$, $0<\theta<1$ and $p$ is such that $\frac{1}{p}=\theta(\frac{1}{r}-\frac{1}{d})+(1-\theta)\frac{1}{q}$. For $i=1,2$, we set $p_i=\frac{2\alpha_i}{m_i}$, $q_i=r_i=2$, and we get
$$
\theta_i=\frac{(\alpha_i-m_i)d}{2\alpha_i}.
$$
We observe that requiring $\theta_i<1$ yields no restrictions in case $d=1,2$ and it requires $\alpha_i<\frac{m_i d}{d-2}$ in case $d>2$. Clearly, here we are implicitly assuming $\alpha_i\ge m_i$. The strong convergence of $\rho_{i,\tau_{k}}$ in $L^{\alpha_i}$ for $\alpha_i<m_i$ is a straightforward consequence of corollary \ref{cfi} and $L^p$ interpolation.
Using even the H\"{o}lder inequality we obtain
\begin{equation*}
\begin{split}
&||\rho_{i,\tau}^{\frac{m_i}{2}}-\tilde{\rho}_i^\frac{m_i}{2}||_{L_{t,x}^{p_i}}^{p_i}=\int_0^T||\rho_{i,\tau}^{\frac{m_i}{2}}(t,\cdot)-\tilde{\rho}_i^\frac{m_i}{2}(t,\cdot)||_{L^{p_i}}^{p_i}\,dt\\&\le C\int_0^T||\nabla\rho_{i,\tau}^{\frac{m_i}{2}}(t,\cdot)-\nabla\tilde{\rho}_i^\frac{m_i}{2}(t,\cdot)||_{L^2}^{p_i\theta_i}||\rho_{i,\tau}^{\frac{m_i}{2}}(t,\cdot)-\tilde{\rho}_i^\frac{m_i}{2}(t,\cdot)||_{L^2}^{p_i(1-\theta_i)}\,dt\\&\le C\left(\int_0^T||\nabla\rho_{i,\tau}^{\frac{m_i}{2}}(t,\cdot)-\nabla\tilde{\rho}_i^\frac{m_i}{2}(t,\cdot)||_{L^2}^{2}\,dt\right)^{\frac{p_i\theta_i}{2}}\left(\int_0^T||\rho_{i,\tau}^{\frac{m_i}{2}}(t,\cdot)-\tilde{\rho}_i^\frac{m_i}{2}(t,\cdot)||_{L^2}^{\gamma_i}\,dt\right)^{\frac{m_i-\alpha_i\theta_i}{m_i}}.
\end{split}
\end{equation*}

where $$\gamma_i=\frac{(1-\theta_i)2\alpha_i}{m_i-\alpha_i\theta_i}.$$ Thanks to the result in Theorem \ref{tfi}, the first term at the right-hand side above is uniformly bounded. Motivated by $\rho_{i,\tau_k}$ converging strongly in $L^{m_i}_{x,t}$ to $\tilde{\rho}_{i}$, \eqref{convsigma} and \eqref{32epsilon} imply the assertion provided $\gamma_i>0$, which yields the condition \eqref{improved_Lp} above.
\end{proof}

\subsection{Consistency of the scheme: convergence to weak solutions}
We are now ready to deal with the consistency of the scheme, i.e. with proving that the strong limit $\tilde{\rrho}$ is a weak solution to \eqref{sist} in the sense of definition \ref{defweaksolution}. As usual, see \cite{JKO}, this task is performed by writing down the Euler-Lagrange equations related to the scheme \eqref{jko}.

\begin{thm}
The approximating sequence $\rrho_{\tau_k}$ converges to a weak solution $\tilde{\rrho}$ to \eqref{sist}.
\end{thm}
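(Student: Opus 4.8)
The plan is to derive the Euler--Lagrange equations associated with the minimization step \eqref{jko} and then pass to the limit as $\tau_k\to 0$, using the strong $L^{\alpha_i}$ compactness from Corollary \ref{cfi1} and the $H^1$-bound on $\rho_{i,\tau}^{m_i/2}$ from Theorem \ref{tfi}. First I would fix $n$ and, for each $i=1,2$, perturb the minimizer $\rho_{i,\tau}^{n+1}$ along the flow of a smooth compactly supported vector field $\zeta\in C_c^\infty(\R^d;\R^d)$, i.e. consider $P_s = (\id + s\zeta)_{\#}\rho_{i,\tau}^{n+1}$ and exploit minimality $\frac{d}{ds}\big|_{s=0}\big[\frac{1}{2\tau}\mW_2^2(\rrho_\tau^n,\cdot)+\mF[\,\cdot\,|\rrho_\tau^n]\big]\ge 0$ (actually $=0$ by considering $\pm\zeta$). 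The displacement derivative of the Wasserstein term produces $\frac{1}{\tau}\int (x-t_{i,\tau}^n(x))\cdot\zeta(x)\,d\rho_{i,\tau}^{n+1}$ where $t_{i,\tau}^n$ is the optimal transport map pushing $\rho_{i,\tau}^{n+1}$ onto $\rho_{i,\tau}^n$ (absolute continuity, guaranteed by the remark after Proposition \ref{propaa}, ensures the map exists); the derivative of $\int A$ gives $-\int \mathrm{div}(\rho_{i,\tau}^{n+1}\nabla A_{\rho_i})\cdot$ (tested against $\zeta$), that is $\int \nabla A_{\rho_i}(\rho_{1,\tau}^{n+1},\rho_{2,\tau}^{n+1})\cdot\zeta\,d\rho_{i,\tau}^{n+1}$ after an integration by parts justified by the $H^1$ regularity of $(\rho_{i,\tau}^{n+1})^{m_i/2}$ and assumptions (D1)--(D2); the self-interaction term contributes the symmetrized double integral against $\nabla H_i(x-y)\cdot(\zeta(x)-\zeta(y))$ using $H_i(-x)=H_i(x)$; and the explicit cross term contributes $\int \nabla K_i\ast\rho_{j,\tau}^n\cdot\zeta\,d\rho_{i,\tau}^{n+1}$ (here $\rho_{j,\tau}^n$ is frozen, which is exactly the semi-implicit feature). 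This yields the discrete weak identity.

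Next I would rewrite this discrete identity in terms of the piecewise-constant interpolants. The key observation, as in \cite{JKO,DFF}, is that the "finite-difference" combination of the transport-map term telescopes: for a test function of the form $\zeta = \nabla\phi$ with $\phi\in C_c^\infty(\R^d)$, the term $\frac{1}{\tau}\int(x - t_{i,\tau}^n(x))\cdot\nabla\phi\,d\rho_{i,\tau}^{n+1}$ equals, up to an $O(\tau)$ error controlled by $\mathrm{D}^2\phi$ and $\mW_2^2(\rho_{i,\tau}^n,\rho_{i,\tau}^{n+1})$, the difference quotient $\frac{1}{\tau}\big(\int\phi\,d\rho_{i,\tau}^{n+1}-\int\phi\,d\rho_{i,\tau}^n\big)$; summing the telescopic error uses $\sum_k \mW_2^2(\rrho_\tau^k,\rrho_\tau^{k+1}) = O(\tau)$ from \eqref{telescopic1}. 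After multiplying by a smooth time weight and summing over $n$, I would take $\tau_k\to0$: the discrete time-derivative converges in the distributional sense against $C_c^\infty$ time-test functions to $\frac{d}{dt}\int\phi\,d\tilde\rho_i$; the diffusion term $\int\nabla A_{\rho_i}\cdot\nabla\phi\,d\rho_{i,\tau}$ passes to the limit by writing $\rho_i\nabla A_{\rho_i}$ as a combination of $\nabla(\rho_i^{m_i/2})$ times lower-order powers of $\rho_i$ (via (D1)--(D2) and the chain rule), then using weak convergence of $\nabla(\rho_{i,\tau}^{m_i/2})$ in $L^2_{t,x}$ against the a.e.\ convergence of the $\rho_i$-dependent factors, with the growth condition (D2) and Corollary \ref{cfi1} (the exponents $M_i$ and $\alpha_i$ are precisely designed so these products are equi-integrable); the self-interaction term passes using (H1) and uniform second-moment bounds together with narrow and strong convergence; the cross term passes because $\nabla K_i\ast\rho_{j,\tau}^n\to\nabla K_i\ast\tilde\rho_j$ locally uniformly by (K1) and $\rho_{j,\tau}^n$ being at distance $O(\tau)$ in $\mW_2$ from $\rho_{j,\tau}(t)$, matched against the strong $L^{\alpha_i}$ convergence of $\rho_{i,\tau}$.

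The main obstacle will be the rigorous passage to the limit in the nonlinear diffusion term. One must verify that $\rho_i\,\partial_{\rho_i}A(\rho_1,\rho_2) = \rho_i\,A_{\rho_i}$ can be expressed so that its space-gradient against $\nabla\phi$ only involves the controlled quantity $\nabla(\rho_i^{m_i/2})\in L^2_{t,x}$ multiplied by factors that are products of powers $\rho_1^a\rho_2^b$ whose combined integrability, by (D2) with exponents strictly below $M_i$, stays in the range guaranteed by Corollary \ref{cfi1}; this is where the interplay $\frac{a m_1}{2\alpha_1}+\frac{b m_2}{2\alpha_2}=1$ and $\alpha_i<M_i$ is used to close a H\"older estimate ensuring both uniform bound and equi-integrability, so that weak $\times$ strong $\Rightarrow$ convergence of the product. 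A secondary technical point is justifying the integration by parts in the first variation at the discrete level (the minimizers are only known to be $L^{m_i}$ with $(\rho_{i,\tau}^{n+1})^{m_i/2}\in H^1$, not smooth), which one handles by an approximation argument exactly as in \cite{CDFFLS,LM}. Once these two points are settled, collecting all limits and localizing in time (the time-test function being arbitrary in $C_c^\infty(0,T)$) yields the identities (iii) of Definition \ref{defweaksolution} for a.e.\ $t$, while (i)--(ii) have already been established in Remark \ref{lmbound}, Corollary \ref{cfi1}, and Theorem \ref{tfi}; this completes the proof.
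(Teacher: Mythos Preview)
Your overall plan is correct and matches the paper's strategy: derive the Euler--Lagrange equations of the JKO step by perturbing the minimizer along $P^\ee=\id+\ee\zeta$, telescope in $n$ using \eqref{telescopic1}, and pass to the limit by combining the narrow convergence of Proposition~\ref{propaa}, the strong $L^{\alpha_i}$ compactness of Corollary~\ref{cfi1}, and the weak $L^2_{t,x}$ convergence of $\nabla\rho_i^{m_i/2}$ from Theorem~\ref{tfi}. The treatment of the self- and cross-interaction parts, the transport term, and the telescopic time difference are exactly as in the paper.

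The one genuine difference is how you handle the first variation of the diffusion term. You propose to differentiate $\int A(\rho_1^\ee,\rho_2)\,dx$ formally and then integrate by parts at the discrete level to obtain $\int\rho_1\nabla A_{\rho_1}\cdot\zeta\,dx$, justifying this by the $H^1$ regularity of $(\rhoitnn)^{m_i/2}$ and an approximation argument. The paper instead avoids any integration by parts at the discrete stage: after a change of variables it writes the difference as a second-order Taylor expansion of $B(\eta_1,\eta_2)=A(\eta_1^{2/m_1},\eta_2^{2/m_2})$, yielding three separate terms $\int B\,\Delta\varphi$, $-\frac{m_1}{2}\int B_{\eta_1}\eta_1\,\Delta\varphi$, and $\int B_{\eta_2}\nabla\eta_2\cdot\nabla\varphi$; the appearance of $\nabla\eta_2$ comes from the composition $\etadtnn(P^\ee(x))-\etadtnn(x)$ and is controlled by the dedicated Lemma~\ref{convlemma}, while the Lagrange remainder is shown to be $o(\ee)$ via (D2) and the $L^{\alpha_i}$ bounds. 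Only \emph{after} passing to the limit $\tau\to 0$ does the paper integrate by parts to recombine these three terms into $\int\rho_1\nabla A_{\rho_1}\cdot\nabla\varphi$. Your route is legitimate once you rewrite $\rho_1\nabla A_{\rho_1}$ in terms of $\nabla\eta_1,\nabla\eta_2$ via the chain rule (which (D1) allows) and use (D2) to check integrability, but the ``approximation argument exactly as in \cite{CDFFLS,LM}'' you invoke is not quite on point (the first has no diffusion, the second a very specific one); you would in effect be reproving the content of Lemma~\ref{convlemma} and the remainder estimate. The paper's approach buys a cleaner justification of the $\ee$-limit with minimal regularity assumptions on $\rhoitnn$, while yours gives the final form of the diffusion term earlier at the cost of a more delicate justification of the first variation.
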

\begin{proof}
We split the proof into several steps to improve its readability.

\smallskip
\noindent
\textbf{Step 0: perturbation of the the JKO optimiser.} Consider two consecutive steps in the semi-implicit JKO scheme \eqref{jko}, i.e. $\rrhotn$, $\rrhotnn$, and let us proceed by perturbing the first component of $\rrhotnn$ in the following way  
\begin{equation}\label{eq:perturbation}
 \rrho^\ee = (\rho_1^\ee,\rho_2^\ee)=(P_\#^\ee\rhoutnn,\rhodtnn),
\end{equation}
where $P^\ee=\id+\ee\zeta$, for some $\zeta\in C_c^\infty(\Rd;\Rd)$ and $\ee\ge0$. From the minimizing property of $\rrhotnn$ we have 
\begin{equation}\label{optimality}
0\leq\frac{1}{2\tau}\left[\mW_2^2(\rrhotn,\rrho^\ee)- \mW_2^2(\rrhotn, \rrhotnn)\right]+\mF[\rrho^\ee|\rrhotn]-\mF[\rrhotnn|\rrhotn].
\end{equation}
We now analyse the several terms contained in \eqref{optimality}.

\smallskip
\noindent
\textbf{Step 1: the nonlocal interaction terms.}
The self interaction term involving $H_2$ gives a null contribution in the difference $\mF[\rrho^\ee|\rrhotn]-\mF[\rrhotnn|\rrhotn]$, whereas the $H_1$-self-interaction terms give 

\begin{equation}\label{integral1}
\begin{split}
&\frac{1}{2}\int_{\R^d}H_{1}\ast\rho_{1}^\ee d\rho_{1}^\ee-\frac{1}{2}\int_{\R^d}H_{1}\ast\rhoutnn  d\rhoutnn\\&=\frac{1}{2}\int_{\R^{2d}}\left[H_{1}(P^\ee(x)-P^\ee(y))-H_{1}(x-y)\right]\rhoutnn(y) \rhoutnn(x)\,dy\,dx\\&=\frac{1}{2}\int_{\R^{2d}}\left[H_{1}(x-y +\ee(\zeta(x)-\zeta(y)))-H_{1}(x-y)\right]\rhoutnn(y)\rhoutnn(x)\,dy\,dx.
\end{split}
\end{equation}\noindent
Now, from the assumptions on $H_1$ we get
\begin{equation}\label{convergH}
\frac{H_{1}(x-y +\ee(\zeta(x)-\zeta(y)))-H_{1}(x-y)}{\ee} \rightarrow \nabla H_1(x-y)\cdot(\zeta(x)-\zeta(y))
\end{equation}
as $\ee\to 0$ for all $(x,y)\in \R^{2d}$. By means of Egorov's theorem, for every $\sigma>0$ there exists $B_\sigma\subset\Rdd$ measurable such that $$\int\int_{B_\sigma}\rhoutnn(y)\rhoutnn(x)\,dx\,dy<\sigma$$ and the convergence \eqref{convergH} is uniform on $\Rdd\setminus B_{\sigma}$, while in $B_\sigma$ the control on the gradient $|\nabla H_i(x)|\le C(1+|x|)$ in assumption (H1) allows us to neglect the integral on $B_\sigma$ in the limit-integral interchange, so we get
\begin{equation*}
\begin{split}
&\int_{\R^{2d}}\left(\frac{H_{1}(x-y +\ee(\zeta(x)-\zeta(y)))-H_{1}(x-y)}{\ee}\right)\rhoutnn(y) \rhoutnn(x)\,dy\,dx\\&\quad\rightarrow \int_{\R^{2d}}\nabla H_1(x-y)\cdot(\zeta(x)-\zeta(y))\rhoutnn(y) \rhoutnn(x)\,dy\,dx.
\end{split}
\end{equation*}
Therefore, by Taylor expansion the last term in \eqref{integral1} can be written as
\begin{equation*}
\frac{\ee}{2}\int_{\R^{2d}}\nabla H_{1}(x-y)\cdot\left(\zeta(x)-\zeta(y)\right)\rhoutnn(y) \rhoutnn(x)\,dy\,dx+o(\ee).
\end{equation*}
Let us now compute the terms in $\mF[\rrho^\ee|\rrhotn]-\mF[\rrhotnn|\rrhotn]$ involving the cross-interaction potentials. Once again, as the perturbation of the identity is directed only in the first component, the term involving $K_2$ cancels out, and we are left with the contribution
\begin{equation*}
\begin{split}
&\int_{\Rd}K_{1}\ast\rhodtn d\rho_{1}^\ee-\int_{\Rd}K_{1}\ast\rhodtn d\rhoutnn\\&=\int_{\R^{2d}}\left(K_{1}(x+\ee\zeta(x)-y)-K_{1}(x-y)\right)\rhodtn(y) \rhoutnn(x)\,dy\,dx\\&=\ee\int_{\R^{2d}}\nabla K_{1}(x-y)\cdot\zeta(x)\rhodtn(y)\rhoutnn(x)\,dy\,dx+o(\ee),
\end{split}
\end{equation*}
where the last step can be justified as before. Notice that no symmetrization can be performed here to compensate a possible discontinuity of $\nabla K_1$ at zero, that is why we need $\nabla K$ to be continuous everywhere, which is guaranteed by assumption (K1).

\smallskip
\noindent
\textbf{Step 2: the diffusion term.} 
We define $B:[0,+\infty)^2\rightarrow \R$ as 
\[B(\eta_1,\eta_2):= A\left(\eta_1^{\frac{2}{m_1}},\eta_2^{\frac{2}{m_2}}\right).\]
With this notation, the difference between the diffusion terms in \eqref{optimality} can be rewritten as
\begin{equation}\label{diffusiondiff}
\begin{split}
& \int_{\R^d}A\left(\rho_1^\ee(x),\rho_2^\ee(x)\right)\,dx-\int_{\Rd}A(\rhoutnn(x),\rhodtnn(x))\,dx\\
& \quad = \int_{\R^d} B\left((\rho_1^\ee(x))^{m_1/2},(\rho_2^\ee(x))^{m_2/2}\right)\, dx - \int_{\R^d} B\left(\rhoutnn(x)^{m_1/2}, \rhodtnn(x)^{m_2/2}\right)\, dx.
\end{split}
\end{equation}
For simplicity, we shall denote for $i=1,2$,
\begin{align*}
& \etaitnn(x):= (\rhoitnn(x))^{m_i/2}\,\qquad \eta_i^\ee(x):= (\rho_i^\ee(x))^{m_i/2}.
\end{align*}
Hence, the first term of the above difference can be written as follows by using the definition of push-forward, the change-of-variables formula and the Taylor-Lagrange expansion of $B$ of order $2$:
\begin{equation}\label{diffusiontaylor}
\begin{split}
&\int_{\R^d}B\left(\eta_1^\ee(x),\eta_2^\ee(x)\right)\,dx=\int_{\R^d}B(\eta_1^\ee(x),\etadtnn(x))\,dx\\&=\int_{\R^d}B\left(\frac{\etautnn(x)}{(\det(\nabla P^\ee(x)))^{m_1/2}},\etadtnn(P^\ee(x))\right)\det(\nabla P^\ee(x))\,dx\\&=\int_{\Rd}B(\etautnn(x),\etadtnn(x))\det(\nabla P^\ee(x))\,dx\\&+\int_{\R^d}B_{\eta_1}(\etautnn (x),\etadtnn(x))\etautnn(x)[1-(\det(\nabla P^\ee(x)))^{\frac{m_1}{2}}]\det(\nabla P^\ee(x))^{1-\frac{m_1}{2}}\,dx\\&+\int_{\Rd}B_{\eta_2}(\etautnn(x),\etadtnn(x))[\etadtnn(P^\ee(x))-\etadtnn(x)]\det(\nabla P^\ee(x))\,dx\ + R.
\end{split}
\end{equation}
Here the remainder term $R$ is defined as follows for intermediate points $\bar{\etau}^\ee(x)$ (resp. $\bar{\etad}^\ee(x)$) between $\etautnn(x)$ and $\etautnn(x)/(\det(\nabla P^\ee(x)))^{m_1/2}$ ($\etadtnn\circ P^\ee(x)$ and $\etadtnn(x)$ resp.):
\begin{equation*}
\begin{split}
R&=\frac{1}{2}\int_{\Rd}B_{\etau\etau}(\bar{\etau}^\ee,\bar{\etad}^\ee)[\etautnn(x)]^2 [1-(\det(\nabla P^\ee(x)))^{\frac{m_1}{2}}]^2(\det(\nabla P^\ee(x)))^{1-m_1}\,dx\\&+\int_{\Rd}B_{\etau\etad}(\bar{\etau}^\ee,\bar{\etad}^\ee)\etautnn(x)[\etadtnn(P^\ee(x))-\etadtnn(x)][1-(\det(\nabla P^\ee(x)))^{\frac{m_1}{2}}]\det(\nabla P^\ee(x))^{1-\frac{m_1}{2}}\,dx\\&+\frac{1}{2}\int_{\Rd}B_{\etad\etad}(\bar{\etau}^\ee,\bar{\etad}^\ee)[\etadtnn(P^\ee(x))-\etadtnn(x)]^2\det(\nabla P^\ee(x))\,dx.
\end{split}
\end{equation*}
Recalling the formula $\det(\nabla P^\ee(x))=1+\ee\text{div}\zeta(x)+o(\ee)$ and by Taylor expanding $(1-z^{m_1/2})$ around $z=1$, we easily get
\[1-(\det(\nabla P^\ee(x)))^{\frac{m_1}{2}}=-\frac{m_1}{2}(\det(\nabla P^\ee(x))-1) + o(\ee)=-\frac{m_1\, \ee}{2}\text{div}\zeta(x)+o(\ee).\]
Moreover, we have
\[(\det(\nabla P^\ee(x)))^{1-m_1} = 1+o(\ee)=(\det(\nabla P^\ee(x)))^{1-\frac{m_1}{2}}.\]
Now, we claim that the remainder $R$ goes to zero faster than $\ee$ as $\ee\searrow 0$. This fact can be easily checked in view of the growth conditions (D2), of Lemma \ref{convlemma} (recall $\nabla \eta_{i,\tau}^{n+1} \in L^2$ in Theorem \ref{tfi}) which guarantees that the differences $\eta_{2,\tau}^{n+1}\circ P^\ee - \eta_{2,\tau}^{n+1}$ are $O(\ee)$ in $L^2$ as $\ee\searrow 0$, of the uniform $L^{\alpha_i}$ control of $\rho_{i,\tau}^{n+1}$ consequence of the Gagliardo-Nirenberg inequality in corollary \ref{cfi}, of the Lebesgue dominated convergence Theorem (recalling that $\zeta\in C^\infty_c(\R^d)$), and by using  Cauchy-Schwarz inequality several times. For the same reason, the term
\[\int_{\R^d} B_{\eta_2}(\etautnn(x),\etadtnn(x))[\etadtnn(P^\ee(x))-\etadtnn(x)](\ee\text{div}\zeta(x) +o(\ee))\, dx\]
in \eqref{diffusiontaylor} is $o(\ee)$ as $\ee\searrow 0$. This computation is quite standard and the details are left to the reader. Therefore, \eqref{diffusiondiff} becomes
\begin{equation*}
\begin{split}
&\int_{\R^d}B\left(\eta_1^\ee(x),\eta_2^\ee(x)\right)\,dx-\int_{\R^d}B(\etautnn(x),\etadtnn(x))\,dx\\&=\ee\int_{\R^d}B(\etautnn(x),\etadtnn(x))\text{div}\zeta(x)\,dx\\&-\frac{\ee m_1}{2}\int_{\R^d}B_{\eta_1}\left(\etautnn (x),\etadtnn(x)\right)\etautnn(x)\text{div}\zeta(x)\,dx\\&+\int_{\Rd}B_{\eta_2}(\etautnn(x),\etadtnn(x))[\etadtnn(P^\ee(x))-\etadtnn(x)]\,dx +o(\ee).
\end{split}
\end{equation*}
\smallskip

\noindent
\textbf{Step 3: the Wasserstein distance terms.} For the sake of completeness, we recall the standard computations to deal with the terms in \eqref{optimality} involving the Wasserstein distance. Brenier's Theorem allows to take $T:=T_{n}^{n+1}$ the optimal map between $\rhoutn$ and $\rhoutnn$ (see \cite{S,V1,V2}) and then
$$
W_2^2(\rhoutn,\rhoutnn)=\int_{\Rd}|x-T(x)|^2\rhoutn(x)\,dx,
$$
while
$$
W_2^2(\rhoutn,\rho_1^\ee)\le\int_{\R^d}|x-P^\ee(T(x))|^2\rhoutn(x)\,dx,
$$
since the map $P^\ee\circ T$ transports $\rhoutn$ into $\rhou^\ee$, but we do not know if it is optimal or not. Hence,
\begin{equation*}
\begin{split}
&\frac{1}{2\tau}\left(\mW_2^2(\rrhotn, \rrho^\ee)-\mW_2^2(\rrhotn, \rrhotnn)\right)=\frac{1}{2\tau}\left(W_2^2(\rhoutn,\rho_1^\ee)- W_2^2(\rhoutn,\rhoutnn)\right)\\
 & \leq\frac{1}{2\tau}\int_{\R^d}\left(|x-P^\ee(T(x))|^2 -|x-T(x)|^2\right)\rhoutn(x)\,dx\\
 & =\frac{1}{2\tau}\int_{\R^d}\left(|x-T(x)-\ee\zeta(T(x))|^2 -|x-T(x)|^2\right) \rhoutn(x)\,dx\\
 &=-\frac{\ee}{\tau}\int_{\R^d}(x-T(x))\cdot \zeta(T(x))\rhoutn(x)dx+o(\ee).
\end{split}
\end{equation*}
\smallskip

\noindent
\textbf{Step 4: Sending $\ee$ to zero.} Summing up all the contributions, dividing by $\ee$ and performing again the same computation with $\ee\le0$, we obtain for $\zeta=\nabla\varphi$
\begin{equation}\label{soldiscretaconeps}
\begin{split}
\frac{1}{\tau}&\int_{\R^d}(x-T(x))\cdot\nabla\varphi (T(x))\rhoutn(x)\,dx\\&=\int_{\R^d}B(\etautnn(x),\etadtnn(x))\Delta\varphi(x)\,dx\\&-\frac{m_1}{2}\int_{\R^d}B_{\eta_1}\left(\etautnn (x),\etadtnn(x)\right)\etautnn(x)\Delta\varphi(x)\,dx\\&+\int_{\Rd}B_{\eta_2}(\etautnn(x),\etadtnn(x))\frac{[\etadtnn(P^\ee(x))-\etadtnn(x)]}{\ee}\,dx\\& + \frac{1}{2}\int_{\R^{2d}}\nabla H_{1}(x-y)\cdot\left(\nabla\varphi(x)-\nabla\varphi(y)\right)\rhoutnn(y)\rhoutnn(x)\,dy\,dx\\
 & +\int_{\R^{2d}}\nabla K_{1}(x-y)\cdot\nabla\varphi(x)\rhodtn(y)\rhoutnn(x)\,dy\,dx+ o(\ee).
\end{split}
\end{equation}\noindent
Notice that when $\ee$ goes to $0$ we have that
\begin{equation*}
\begin{split}
&\int_{\Rd}B_{\eta_2}(\etautnn(x),\etadtnn(x))\frac{[\etadtnn(P^\ee(x))-\etadtnn(x)]}{\ee}\,dx\\&-\int_{\Rd}B_{\eta_2}(\etautnn(x),\etadtnn(x))\nabla\eta_{2,\tau}^{n+1}(x)\nabla\varphi(x)\,dx\,\to\,0,
\end{split}
\end{equation*}
thanks to assumption (D2) and Lemma \ref{convlemma} in the appendix, since $\eta_{2,\tau}^{n+1}\in H^1$. 
Hence, by sending $\ee$ to $0$ in \eqref{soldiscretaconeps} we obtain
\begin{equation}\label{finalcomp}
\begin{split}
\frac{1}{\tau}&\int_{\R^d}(x-T(x))\cdot\nabla\varphi (T(x))\rhoutn(x)\,dx\\&= \int_{\R^d}B(\etautnn(x),\etadtnn(x))\Delta\varphi(x)\,dx\\&-\frac{m_1}{2}\int_{\R^d}B_{\eta_1}\left(\etautnn (x),\etadtnn(x)\right)\etautnn(x)\Delta\varphi(x)\,dx\\&+\int_{\Rd}B_{\eta_2}(\etautnn(x),\etadtnn(x))\nabla\eta_{2,\tau}^{n+1}(x)\nabla\varphi(x)\,dx\\& + \frac{1}{2}\int_{\R^{2d}}\nabla H_{1}(x-y)\cdot\left(\nabla\varphi(x)-\nabla\varphi(y)\right)\rhoutnn(y)\rhoutnn(x)\,dy\,dx\\
 & +\int_{\R^{2d}}\nabla K_{1}(x-y)\cdot\nabla\varphi(x)\rhodtn(y)\rhoutnn(x)\,dy\,dx.
\end{split}
\end{equation}
By Taylor expanding $\varphi$ on $T(x)$ and using the definition of push-forward and the H\"{o}lder continuity estimate \eqref{holdercont}, we can rewrite the left-hand side of \eqref{finalcomp} as
$$
\int_{\R^d}(x-T(x))\cdot\nabla\varphi (T(x))\rhoutn(x)\,dx=\int_{R^d}\varphi(x)\left[\rhoutn(x)-\rhoutnn(x)\right]\,dx+O(\tau).
$$
Let $0\le t<s$ be fixed, with
$$
h=\left[\frac{t}{\tau}\right]\quad \text{and}\quad k=\left[\frac{s}{\tau}\right].
$$
By summing the equation \eqref{finalcomp}, using the chain rule on $B$, we have
\begin{equation*}
\begin{split}
&\int_{\Rd}\varphi(x)\,d\rhout^k(x)-\int_{\Rd}\varphi(x)\,d\rhout^h(x)+O(\tau)\\&=-\sum_{j=h}^{k}\tau\left[\int_{\R^d}A(\rhout^{j+1}(x),\rhodt^{j+1}(x))\Delta\varphi(x)\,dx\right.\\&-\int_{\R^d}A_{\rho_1}\left(\rhout^{j+1} (x),\rhodt^{j+1}(x)\right)\rhout^{j+1}(x)\Delta\varphi(x)\,dx\\&\left.+\frac{2}{m_2}\int_{\Rd}(\rhodt^{j+1}(x))^{1-\frac{m_2}{2}} A_{\rho_2}(\rhout^{j+1}(x),\rhodt^{j+1}(x))\nabla\left(\rho_{2,\tau}^{j+1}(x)\right)^{\frac{m_2}{2}}\nabla\varphi(x)\,dx\right]\\& -\sum_{j=h}^{k} \frac{\tau}{2}\int_{\R^{2d}}\nabla H_{1}(x-y)\cdot\left(\nabla\varphi(x)-\nabla\varphi(y)\right)\rhout^{j+1}(y)\rhout^{j+1}(x)\,dy\,dx\\
 & -\sum_{j=h}^{k}\tau\int_{\R^{2d}}\nabla K_{1}(x-y)\cdot\nabla\varphi(x)\rhodt^j(y)\rhout^{j+1}(x)\,dy\,dx,
\end{split}
\end{equation*}
which is equivalent to the following due to the definition of piecewise constant interpolation $\rrhot$:

\begin{equation}\label{finalel}
\begin{split}
&\int_{\Rd}\varphi(x)\,d\rhout(s,x)-\int_{\Rd}\varphi(x)\,d\rhout(t,x)+O(\tau)\\&=-\int_{t}^{s}\int_{\R^d}\left[\vphantom{\nabla\rhodt^{\frac{m_2}{2}}}A(\rhout(\sigma,x),\rhodt(\sigma,x))\Delta\varphi(x) - A_{\rho_1}(\rhout(\sigma,x),\rhodt(\sigma,x))\rhout(\sigma,x)\Delta\varphi(x)\right.\\
& \left.+\frac{2}{m_2}\rhodt^{1-\frac{m_2}{2}}A_{\rho_2}(\rhout(\sigma,x),\rhodt(\sigma,x))\nabla \rhodt^{\frac{m_2}{2}}(\sigma,x)\nabla\varphi(x)\right]\,dx\,d\sigma\\& -\frac{1}{2}\int_{t}^{s}\int_{\R^{2d}}\nabla H_{1}(x-y)\cdot\left(\nabla\varphi(x)-\nabla\varphi(y)\right)\rhout(\sigma,y)\rhout(\sigma,x)\,dy\,dx\,d\sigma\\
 & -\int_{t}^{s}\int_{\R^{2d}}\nabla K_{1}(x-y)\cdot\nabla\varphi(x)\rhodt(\sigma-\tau,y)\rhout(\sigma,x)\,dy\,dx\,d\sigma.
\end{split}
\end{equation}
We are now ready to pass to the limit as $\tau\rightarrow0^+$ in order to recover the first component of a weak measure solution in the sense of the Definition \ref{defweaksolution}. The weak measure convergence in Proposition \ref{propaa} allows us to pass to the limit in the convolution terms in \eqref{finalel}. The diffusion terms involving $A$ and its derivatives can be easily passed to the limit due to corollary \ref{cfi1} combined with assumption (D2) on $A$, where we also use the weak convergence (up to a subsequence) of $\nabla\rhodt^{\frac{m_2}{2}}$ to $\nabla \rho_2^{\frac{m_2}{2}}$ which is a consequence of Theorem \ref{tfi}. Setting $(\rho_1,\rho_2)$ as the $\tau\rightarrow0^+$ limits of $(\rhout,\rhodt)$, a straightforward integration by parts yields
\begin{align*}
& -\int_s^t\int_{\R^d}\left[\vphantom{\nabla\rhodt^{\frac{m_2}{2}}}A(\rho_1(\sigma,x),\rho_2(\sigma,x))\Delta\varphi(x) - A_{\rho_1}(\rho_1(\sigma,x),\rho_2(\sigma,x))\rho_1(\sigma(x))\Delta\varphi(x)\right.\\
& \left.+\frac{2}{m_2}\rho_2^{1-\frac{m_2}{2}}A_{\rho_2}(\rho_1(\sigma,x),\rho_2(\sigma,x))\nabla \rho_2^{\frac{m_2}{2}}(\sigma,x)\nabla\varphi(x)\right]\,dx\,d\sigma\\
& = -\int_s^t\int_{\R^d}\rho_1(\sigma,x)\nabla A_{\rho_1}(\rho_1(\sigma,x),\rho_2(\sigma,x))\nabla \varphi(x)\,dx\,d\sigma.
\end{align*}
Hence, dividing by $s-t$ and taking the limit as $s\searrow t$, one gets the definition of weak solution for the first component. Repeating the same procedure perturbing the second component of $\rrhotnn$ gives the second equation in the Definition \ref{defweaksolution}.
\end{proof}
\appendix
\section{Appendix}
Let us prove the following technical result. 
\begin{lem}\label{convlemma}
Let $f\in H^1(\Rd)$ and $\zeta\in C_c^{\infty}(\Rd;\Rd)$. If we consider a perturbation $P^\ee:=\id+\ee\zeta$ for $\ee>0$, then
\begin{equation}
\frac{f\circ P^\ee-f}{\ee}\underset{\ee\to0}{\longrightarrow}\zeta\cdot\nabla f \quad \text{in}\ L^2(\Rd)
\end{equation}
\end{lem}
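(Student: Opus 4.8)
The plan is a standard density argument: establish the convergence first for $f\in C_c^\infty(\R^d)$, then extend to all of $H^1(\R^d)$ by means of a bound on the difference-quotient operator that is uniform in $\ee$. For $f\in C_c^\infty(\R^d)$ I would write, via the fundamental theorem of calculus along the segment from $x$ to $P^\ee(x)=x+\ee\zeta(x)$,
\[\frac{f(P^\ee(x))-f(x)}{\ee}=\int_0^1\nabla f\big(x+t\ee\zeta(x)\big)\cdot\zeta(x)\,dt.\]
Since $\nabla f$ is uniformly continuous and $\zeta$ is bounded, the integrand converges to $\nabla f(x)\cdot\zeta(x)$ uniformly in $x$ as $\ee\to 0$; moreover the left-hand side vanishes outside $\operatorname{supp}\zeta$ and is bounded there by $\|\nabla f\|_{L^\infty}\|\zeta\|_{L^\infty}$, hence is dominated by a fixed $L^2$ function independent of $\ee\le 1$. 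Dominated convergence then yields the claim in $L^2(\R^d)$ for such $f$.

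Next I would prove the uniform estimate. Fix $\ee_0>0$ with $\ee_0\|\nabla\zeta\|_{L^\infty}<\tfrac12$. For $\ee\in(0,\ee_0)$ and $t\in[0,1]$ the map $\Phi_{t,\ee}(x):=x+t\ee\zeta(x)$ is a $C^\infty$ diffeomorphism of $\R^d$ (injective since $\ee\|\nabla\zeta\|_{L^\infty}<1$, proper since $\zeta$ is bounded, locally invertible by the Jacobian bound below) with $\det D\Phi_{t,\ee}=\det(\mathbb I+t\ee\nabla\zeta)\ge\tfrac12$. Applying Jensen's inequality in $t$ to the representation above, then Tonelli and the change of variables $y=\Phi_{t,\ee}(x)$, I get
\[\Big\|\frac{f\circ P^\ee-f}{\ee}\Big\|_{L^2(\R^d)}^2\le\|\zeta\|_{L^\infty}^2\int_0^1\!\!\int_{\R^d}\big|\nabla f(\Phi_{t,\ee}(x))\big|^2\,dx\,dt\le 2\|\zeta\|_{L^\infty}^2\|\nabla f\|_{L^2(\R^d)}^2.\]
Since $P^\ee$ is bi-Lipschitz with Jacobian bounded above and below, both sides are continuous with respect to the $H^1$-norm of $f$, so the bound with $C:=\sqrt 2\,\|\zeta\|_{L^\infty}$, independent of $\ee$, extends to every $f\in H^1(\R^d)$.

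Finally, for $f\in H^1(\R^d)$ and $\delta>0$ pick $g\in C_c^\infty(\R^d)$ with $\|f-g\|_{H^1}<\delta$ and split
\[\Big\|\frac{f\circ P^\ee-f}{\ee}-\zeta\cdot\nabla f\Big\|_{L^2}\le\Big\|\frac{(f-g)\circ P^\ee-(f-g)}{\ee}\Big\|_{L^2}+\Big\|\frac{g\circ P^\ee-g}{\ee}-\zeta\cdot\nabla g\Big\|_{L^2}+\|\zeta\|_{L^\infty}\|\nabla(g-f)\|_{L^2}.\]
The first term is $\le C\delta$ by the uniform bound, the last is $\le\|\zeta\|_{L^\infty}\delta$, and the middle one tends to $0$ as $\ee\to 0$ by the smooth case; letting $\ee\to0$ and then $\delta\to0$ concludes the proof. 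The only genuinely technical point is the $\ee$-uniform operator bound, i.e.\ verifying that $x\mapsto x+t\ee\zeta(x)$ has Jacobian bounded away from $0$ uniformly in $t\in[0,1]$ for small $\ee$, and that the change of variables may be applied; both are immediate consequences of $\zeta\in C_c^\infty$, so there is no serious obstacle here — the lemma is essentially a quantitative form of the continuity of transport along $P^\ee$ on $H^1$.
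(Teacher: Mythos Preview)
Your proof is correct, but it takes a somewhat different route from the paper. The paper works directly on $f\in H^1(\R^d)$ without any density step: it applies the fundamental-theorem-of-calculus representation to the full difference
\[
\frac{f\circ P^\ee-f}{\ee}-\zeta\cdot\nabla f=\int_0^1\zeta(x)\cdot\bigl(\nabla f(x+\tau\ee\zeta(x))-\nabla f(x)\bigr)\,d\tau,
\]
bounds the $L^2$-norm squared by $\|\zeta\|_\infty^2\int_0^1\|\nabla f\circ P^{\tau\ee}-\nabla f\|_{L^2}^2\,d\tau$, and then invokes the $L^p$-continuity of composition $g\mapsto g\circ P^{\tau\ee}$ (applied to $g=\nabla f$) to send the right-hand side to zero. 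Your approach instead establishes the smooth case first, proves an $\ee$-uniform operator bound $\|(f\circ P^\ee-f)/\ee\|_{L^2}\le C\|\nabla f\|_{L^2}$ via the same FTC representation combined with a change of variables and Jacobian estimate, and finally runs a three-term density argument. The paper's argument is shorter and more direct; yours has the minor advantage that the fundamental theorem of calculus is only applied to $C_c^\infty$ functions, so no appeal to the ACL property of Sobolev functions is needed. Either way the technical core is the same FTC identity together with control of the Jacobian of $\id+\tau\ee\zeta$.
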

\begin{proof}
Recall that for every $f\in L^p(\Rd)$ with $1\le p\le+\infty$ we have
\begin{equation}\label{lpconv}
||f\circ P^\ee-f||_{L^p}\underset{\ee\to0}{\longrightarrow}0
\end{equation}
by means of density argument (see for instance \cite[Lemma 4.3]{B}). Using the fundamental theorem of calculus, the Cauchy-Schwartz inequality and Fubini's Theorem,  we get
\begin{equation*}
\begin{split}
&\int_{\Rd}\left|\frac{f(x+\ee\zeta(x))-f(x)}{\ee}-\zeta(x)\cdot\nabla f(x)\right|^2\,dx\\&=\int_{\Rd}\left|\frac{1}{\ee}\int_0^1\left[\frac{d}{d\tau}f(x+\tau\ee\zeta(x))\right]\,d\tau-\zeta(x)\cdot\nabla f(x)\right|^2\,dx\\&=\int_{\Rd}\left|\int_0^1\zeta(x)\cdot\left[\nabla f(x+\tau\ee\zeta(x))-\nabla f(x)\right]\,d\tau\right|^2\,dx\\&\le||\zeta||_{\infty}^2\int_{\Rd}\int_0^1|\nabla f(x+\tau\ee\zeta(x))-\nabla f(x)|^2\,d\tau\,dx\\&=||\zeta||_{\infty}^2\int_0^1||\nabla f\circ P^{\tau\ee}-\nabla f||_{L^2(\Rd)}^2\,d\tau.
\end{split}
\end{equation*}
Thanks to the \eqref{lpconv}, we have that $||\nabla f\circ P^{\tau\ee}-\nabla f||_{L^2(\Rd)}^2$ is as small as we want, which proves the assertion.
\end{proof}

\section*{Acknowledgments}
The authors acknowledge fruitful discussions with Proff. J. A. Carrillo and F. Santambrogio. AE and SF acknowledge support from the Italian INdAM GNAMPA (National group for Mathematical Analysis, Probability, and their applications) project "Analisi di modelli matematici della fisica, della biologia e delle scienze sociali". The authors are supported by the local fund of the University of L'Aquila "DP-LAND" (Deterministic Particles for Local And Nonlocal Dynamics), and from the Erasmus Mundus programme "MathMods", \url{www.mathmods.eu}.


\begin{thebibliography}{99}

\bibitem{amann}H. Amann. 
\textit{Dynamic theory of quasilinear parabolic systems. III. Global existence}, 
Math. Z. 202 (1989),
219-250.

\bibitem{AGS}L. Ambrosio, N. Gigli, G. Savar\'{e},
\textit{Gradient flows in metric spaces and in the space of probability measures}, Lectures in Mathematics ETH $Z\ddot{u}rich$. $Birkh\ddot{a}user$ Verlag, Basel, 2005.

\bibitem{AS}L. Ambrosio, G. Savar\'{e},
\textit{Gradient flows of probability measures}, Handbook of differential equations: evolutionary equations, Vol. III, Handb. Differ. Equ., pages 1-136, Elsevier/North-Holland, Amsterdam, 2007.

\bibitem{degond}
C.~Appert-Rolland, P.~Degond, and S.~Motsch.
\textit{Two-way multi-lane traffic model for pedestrians in corridors},
Netw. Heterog. Media 6 (2011), no. 3, 351--381.

\bibitem{bertsch}M. Bertsch, D. Hilhorst, H. Izuhara, and M. Mimura. 
\textit{A nonlinear parabolic-hyperbolic system
for contact inhibition of cell-growth}, Differential Equations and Applications, 4 (2012), 137-157.

\bibitem{sedimentation}F. Betancourt, R. Bürger, K. H. Karlsen, E. M. Tory, 
\textit{On nonlocal conservation laws modeling sedimentation}, Nonlinearity 24 (2011), no. 3, 855–885




\bibitem{blanchet}
A.~Blanchet, J.~Dolbeault, and B.~Perthame.
\newblock Two-dimensional {K}eller-{S}egel model: optimal critical mass and
  qualitative properties of the solutions.
Electron. J. Differential Equations (2006), no. 44, p.33.

\bibitem{B}H. Brezis,
\textit{Functional Analysis, Sobolev Spaces and Partial Differential Equations}, Springer, 2011.

\bibitem{bruna_chapman}M. Bruna, and S. J. Chapman.
\textit{Diffusion of multiple species with excluded-volume effects},
Journal of Chemical Physics 137 (2012), no. 20.

\bibitem{sorting}M. Burger, M. Di Francesco, S. Fagioli, and A. Stevens, \textit{Sorting Phenomena in a Mathematical Model For Two Mutually Attracting/Repelling Species} - Submitted preprint (arxiv 1704.04179).

\bibitem{burger_DF_piet_sch}M. Burger, M. Di Francesco, J.-F. Pietschmann, and B. Schlake.
\textit{Nonlinear Cross-Diffusion with Size Exclusion},
SIAM J. Math. Anal. 42 (2010), no. 6, 2842-2871.

\bibitem{capasso}
S.~Boi, V.~Capasso, and D.~Morale.
\textit{Modeling the aggregative behavior of ants of the species {\it
  {p}olyergus rufescens}},
Nonlinear Anal. Real World Appl. 1 (2000), no. 1, 163-176.

\bibitem{CL1}G. Carlier, M. Laborde,
\textit{Remarks on continuity equations with nonlinear diffusion and nonlocal drifts}, J. Math. Anal. Appl. 444 (2016), no. 2, 1690–1702.

\bibitem{CL2}G. Carlier, M. Laborde,
\textit{A splitting method for nonlinear diffusions with nonlocal, nonpotential drifts}, Nonlinear Anal. 150 (2017), 1–18.

\bibitem{CDFFLS}J.A. Carrillo, M. Di Francesco, A. Figalli, T. Laurent, D. Slepcev,
\textit{Global-in-time weak measure solutions and finite-time aggregation for nonlocal interaction equations}, Duke Math. J. 156 (2011), no. 2, 229-271.

\bibitem{zoology}J. A. Carrillo, Y. Huang, M. Schmidtchen,
\textit{Zoology of a non-local cross-diffusion model for two species} -  Submitted preprint (arxiv 1705.03320).



\bibitem{kolokolnikov}Y. Chen and T. Kolokolnikov. 
\textit{A minimal model of predator-swarm interaction}, 
J. R. Soc. Interface 11 (2014).

\bibitem{chen_juengel}L. Chen and A. Jüngel. 
\textit{Analysis of a parabolic cross-diffusion population model without self-diffusion}, J. Diff. Eqs. 224 (2006), 39–59.

\bibitem{colombo}
R.~M. Colombo and M.~L{\'e}cureux-Mercier.
\textit{Nonlocal crowd dynamics models for several populations},
Acta Math. Sci. Ser. B Engl. Ed. 32 (2012), no. 1, 177--196.

\bibitem{conca}
C.~Conca, E.~Espejo, and K.~Vilches.
\textit{Remarks on the blowup and global existence for a two species
  chemotactic {K}eller-{S}egel system in {$\mathbb{R}^2$}},
European J. Appl. Math. 22 (2011), no. 6, 553--580.

\bibitem{crippa}
G.~Crippa and M.~L\'{e}cureux-Mercier.
\textit{Existence and uniqueness of measure solutions for a system of
  continuity equations with non-local flow},
Nonlinear Differential Equations and Applications NoDEA 20 (2013), no. 3,
523–537.

\bibitem{DS}S. Daneri and G. Savar\'{e}.
\textit{Eulerian calculus for the displacement convexity in the Wasserstein distance}, SIAM J. Math. Anal. 40 (2008), no. 3, 1104-1122.

\bibitem{fagioli}
M. Di Francesco and S. Fagioli. 
\textit{A nonlocal swarm model for predators–prey interactions},
Mathematical Models and Methods in Applied Sciences 26 (2016), no. 319, 319-355.

\bibitem{DFF}M. Di Francesco and S. Fagioli.
\textit{Measure solutions for nonlocal interaction PDEs with two species}, Nonlinearity 26 (2013), 2777-2808.

\bibitem{DFM}M. Di Francesco, D. Matthes,
\textit{Curves of steepest descent are entropy solutions for a class of degenerate convection-diffusion equations}, Calc. Var. Partial Differential Equations 50 (2014), no. 1-2, 199-230.

\bibitem{assertiveness}
B. Duering, M.T. Wolfram, 
\textit{Opinion dynamics: inhomogeneous Boltzmann-type equations modelling opinion leadership and political segregation}, 
Proc. Royal Soc. A 471 (2015), no. 2182.

\bibitem{during}
B.~D{\"u}ring, P.~Markowich, J.-F. Pietschmann, and M.-T. Wolfram.
\textit{Boltzmann and {F}okker-{P}lanck equations modelling opinion formation
  in the presence of strong leaders},
Proc. R. Soc. Lond. Ser. A Math. Phys. Eng. Sci. 465 (2009), 3687-3708.

\bibitem{escudero}
C.~Escudero, F.~Maci{\`a}, and J.~J.~L. Vel{\'a}zquez.
\textit{Two-species-coagulation approach to consensus by group level
  interactions}, Phys. Rev. E 3 (2010), no. 82(1).

\bibitem{espejo}
E.~E. Espejo, A.~Stevens, and J.~J.~L. Vel{\'a}zquez.
\textit{A note on non-simultaneous blow-up for a drift-diffusion model},
Differential Integral Equations 23 (2010), no. 5-6, 451-462.

\bibitem{okubo}D. Gr\"unbaum, A. Okubo,
\textit{Modelling social animal aggregations}, Frontiers in Mathematical Biology, Lecture notes in biomathematics 100 (1994), Springer-Verlag.

\bibitem{helbing}
D.~Helbing, I.~J. Farkas, P.~Molnar, and T.~Vicsek.
\textit{Simulation of pedestrian crowds in normal and evacuation situations},
in: M. Schreckenberg and S. D. Sharma (eds.) \emph{Pedestrian and
  Evacuation Dynamics} (Springer, Berlin), pages 21--58, 2002.

\bibitem{jager}
W.~J{\"a}ger and S.~Luckhaus.
\textit{On explosions of solutions to a system of partial differential
  equations modelling chemotaxis}, 
Trans. Amer. Math. Soc., 329 (1992), no. 2, 819--824.

\bibitem{JKO}R. Jordan, D. Kinderlehrer, F. Otto,
\textit{The variational formulation of the Fokker-Planck equation}, SIAM J. Math. Anal., 29 (1998), 1-17.

\bibitem{juengel}A. Jüngel. 
\textit{The boundedness-by-entropy principle for cross-diffusion systems}, Nonlinearity 28 (2015), 1963–2001. 

\bibitem{keller_segel}
E.~F. Keller and L.~A. Segel.
\textit{Initiation of slide mold aggregation viewed as an instability},
J. Theor. Biol., 26 (1970), 399-415.



\bibitem{kreiss}H.-O. Kreiss and J. Lorenz,
\textit{Initial-Boundary Value Problems and the Navier-Stokes Equations},
Academic Press, New. York, 1989.

\bibitem{L}M. Laborde,
\textit{On some non linear evolution systems which are perturbations of Wasserstein gradient flows}, Topological Optimization and Optimal Transport, Radon Ser. Comput. Appl. Math., 2016, in press.

\bibitem{ladyzhenskaya}
O. Ladyzhenskaya, V. A. Solonnikov, N. N. Uralceva.
\textit{Linear and quasilinear equations of parabolic type}, Providence, R.I., American Mathematical Society, 1968.

\bibitem{LM}P. Lauren\c{c}ot, B.-V. Matioc,
\textit{A gradient flow approach to a thin film approximation of the Muskat problem}, Calc. Var. Partial Differential Equations 47 (2013), no. 1-2, 319–341.

\bibitem{nguyen}D. Le and T. Nguyen. 
\textit{Everywhere regularity of solutions to a class of strongly coupled degenerate parabolic
systems}, Commun. Part. Diff. Eqs. 31 (2006), 307-324.

\bibitem{MMCS}D. Matthes, R.J. McCann, G. Savar\'{e},
\textit{A family of fourth order equations of gradient flow type}, Comm. P.D.E. 34 (2009), no. 11, 1352-1397.

\bibitem{mogilner}
A.~Mogilner and L.~Edelstein-Keshet.
\textit{A non-local model for a swarm},
J. Math. Biol. 38 (1999), no. 6, 534-570.

\bibitem{otto}F. Otto,
\textit{The geometry of dissipative evolution equations: the porous medium equation},
Comm. Partial Dierential Equations 26 (2001), no 1-2, 101-174.

\bibitem{pareschi}L. Pareschi and G. Toscani,
\textit{Interacting Multiagent Systems: Kinetic Equations and Monte Carlo Methods},
Oxford University Press, Oxford (2014).

\bibitem{pierre}M. Pierre and D. Schmitt. 
\textit{Blowup in reaction-diffusion systems with dissipation of mass}, SIAM J. Math.
Anal. 28 (1997), 259-269.

\bibitem{preziosi}
L.~Preziosi and L.~Graziano.
\textit{Multiphase models of tumor growth: general framework and particular
  cases},
In {\em Mathematical modelling \& computing in biology and medicine},
  volume~1 of {\em Milan Res. Cent. Ind. Appl. Math. MIRIAM Proj.}, pages
  622--628 (2002).

\bibitem{RS}R. Rossi, G. Savar\'{e},
\textit{Tightness, integral equicontinuity and compactness for evolution problems in Banach spaces}, Ann. Sc. Norm. Super. Pisa Cl. Sci. (5) 2 (2003), no. 2, 395–431.

\bibitem{R}W. Rudin,
\text{Real and Complex Analysis}, 3rd edition (McGraw-Hill, New York, 1986).

\bibitem{S} F. Santambrogio,
\textit{Optimal Transport for Applied Mathematicians}, Progress in Nonlinear Differential Equations and Their Applications no 87, Birkh\"auser Basel (2015).

\bibitem{peletier}L. Scardia, R. Peerlings, M. Geers, and M. A. Peletier, 
\textit{Mechanics of dislocation pile-ups:
A unification of scaling regimes}, 
Journal of the Mechanics and Physics of Solids 70 (2014), 42-61.

\bibitem{stara}J. Star\'a and O. John. 
\textit{Some (new) counterexamples of parabolic systems}, 
Comment. Math. Univ. Carolin. 36 (1995), 503-510.

\bibitem{sznajd}
K.~Sznajd-Weron and J.~Sznajd.
\textit{Opinion evolution in closed community},
Int. J. Mod. Phys. C 11 (2000), 1157-1166.

\bibitem{topaz}
C.~M. Topaz and A.~L. Bertozzi.
\textit{Swarming patterns in a two-dimensional kinematic model for biological
  groups}, SIAM J. Appl. Math., 65 (2004), no. 1, 152-174.
  
\bibitem{toscani_granular}
G.~Toscani.
\textit{Kinetic and hydrodynamic models of nearly elastic granular flows},
Monatsh. Math. 142 (2004), no. 1-2, 179--192.

\bibitem{vazquez}
J.~L. Vazquez
\textit{The Porous Medium Equation},
Oxford Mathematical Monographs (2006).

\bibitem{V1} C. Villani,
\textit{Topics in optimal transportation}, volume 58 of Graduate Studies in Mathematics, American Mathematical Society, Providence, RI, (2003).

\bibitem{V2} C. Villani, 
\textit{Optimal transport, old and new}, Grundlehren der mathematischen Wissenschaften 338, Springer, (2009).

\bibitem{matthes_zinsl}J. Zinsl and D. Matthes.
\textit{Transport distances and geodesic convexity for systems of degenerate diffusion equations},
Calc. Var. Partial Differential Equations 54 (2015), no. 4, 3397-3438. 

\end{thebibliography}
\end{document}